\newtheorem{theorem}{Theorem}[section]
\newtheorem{lemma}[theorem]{Lemma}
\newtheorem{remark}{Remark}[section]
\def\eq#1{(\ref{#1})}
\def\({\left(\begin{array}{cccccc}}
\def\){\end{array}\right)}
\def\eq#1{(\ref{#1})}
\def\({\left(\begin{array}{cccccc}}
\def\){\end{array}\right)}
\def\bes{\begin{eqnarray}}
\def\ees{\end{eqnarray}}
\newcommand{\beq}{\begin{equation}}
\newcommand{\eeq}{\end{equation}}
\newcommand{\bea}{\begin{eqnarray}}
\newcommand{\eea}{\end{eqnarray}}
\newcommand{\beann}{\begin{eqnarray*}}
\newcommand{\eeann}{\end{eqnarray*}}
\newcommand{\eps}{\ensuremath{\varepsilon}}
\newcommand{\RR}{\mathbb{R}}
\DeclareMathOperator{\sgn}{sgn}
\DeclareMathAlphabet{\mathpzc}{OT1}{pzc}{m}{it}
\DeclareMathOperator{\grad}{grad}
\DeclareMathOperator{\dv}{div}
\numberwithin{equation}{section}
\begin{document}

\title{Non-isentropic cavity flow for the multi-d compressible Euler system}
\begin{abstract}
	We rigorously construct non-isentropic and self-similar multi-d Euler flows in which a 
	central cavity (vacuum region) collapses. While isentropic flows of this type have 
	been analyzed earlier by Hunter \cite{hun_60} and others, the non-isentropic setting introduces 
	additional complications, in particular with respect to the behavior along the fluid-vacuum 
	interface. The flows we construct satisfy the physical boundary conditions:
	the interface is a material surface along which  the pressure vanishes, and it
	propagates with a non-vanishing and finite acceleration until collapse.
	
	We introduce a number of algebraic conditions on the parameters in the problem (spatial dimension,
	adiabatic index, similarity parameters). With these conditions satisfied, a simple argument  
	based on trapping regions for the associated similarity ODEs yields the existence of 
	non-isentropic cavity flows. We finally verify that the conditions are all met for several physically
	relevant cases in both two and three dimensions.

	\bigskip \noindent
{\bf Key words.} Compressible flow, vacuum, multi-d flow, radial symmetry, similarity solutions

\noindent
\end{abstract}

\author{Helge Kristian Jenssen}
\address{H.\ K.\ Jenssen: Department of
Mathematics, Penn State University,
State College, PA 16802, USA
({\tt jenssen@math.psu.edu})}

\author{Charis Tsikkou}
\address{C.\ Tsikkou: Department of
Mathematics, West Virginia University,
Morgantown, WV 26506, USA 
({\tt tsikkou@math.wvu.edu})}

\date{\today}
\maketitle

\tableofcontents

\section{Introduction}\label{intro}
\subsection{Setup and goal}\label{setup_goal}
The non-isentropic Euler system describes the time evolution of 
a compressible fluid in the absence of viscosity and heat conduction:
\begin{align}
	\rho_t+\dv_{\bf x}(\rho \bf u)&=0 \label{mass_m_d_full_eul}\\
	(\rho{\bf  u})_t+\dv_{\bf x}[\rho {\bf  u}\otimes{\bf  u}]+\grad_{\bf  x} p&=0
	\label{mom_m_d_full_eul}\\
	(\rho \mathcal E)_t+\dv_{\bf x}[(\rho \mathcal E+p){\bf  u}]&=0.\label{energy_m_d_full_eul}
\end{align}
The independent variables are time $t$ and position ${\bf  x}\in\RR^n$, 
and the dependent variables are density $\rho$, fluid velocity ${\bf  u}$, 
and specific internal energy $\eps$; the total energy density is $\mathcal E=\eps+\textstyle\frac{1}{2}|{\bf  u}|^2$. 
We assume the fluid is an ideal polytropic gas for which the pressure $p$ and the internal 
energy $\eps$ satisfy
\beq\label{p_e}
	p(\rho,\eps)=(\gamma-1)\rho \eps\qquad\text{and}\qquad \eps=c_v\theta,
\eeq
where $\gamma>1$ and $c_v>0$ are constants. 
The sound speed $c$ is then given by
\beq\label{sound_speed}
	c=\sqrt{\textstyle\frac{\gamma p}{\rho}}=\sqrt{\gamma(\gamma-1)\eps}\propto\sqrt\theta.
\eeq

In what follows we specialize to radial flows, i.e., solutions to 
\eq{mass_m_d_full_eul}-\eq{energy_m_d_full_eul} 
where the flow variables depend on position only through $r=|{\bf x}|$, 
and the velocity field is purely radial,
\beq\label{vel}
	{\bf u}(t,{\bf x})=u(t,r)\textstyle\frac{\bf x}{r}.
\eeq
Choosing $\rho$, $u$, $c$ as dependent variables, the Euler system 
\eq{mass_m_d_full_eul}-\eq{energy_m_d_full_eul} reduces to
\begin{align}
	\rho_t+u\rho_r+\rho(u_r+\textstyle\frac{(n-1)u}{r}) &= 0\label{m_eul}\\
	u_t+ uu_r +\textstyle\frac{1}{\gamma\rho}(\rho c^2)_r&= 0\label{mom_eul}\\
	c_t+uc_r+{\textstyle\frac{\gamma-1}{2}}c(u_r+\textstyle\frac{(n-1)u}{r})&=0\label{ener_eul}
\end{align}
where $r>0$, $\rho=\rho(t,r)$, $u=u(t,r)$, and $c=c(t,r)$. 

In $C^1$ Euler flow, the specific entropy $S$ (specified up to an additive constant 
via Gibbs' relation $de=\theta dS-pd(\frac{1}{\rho})$, where $\theta$ denotes absolute temperature) 
is transported along particle paths. For $C^1$ radial solutions it follows that
\beq\label{spec_entr}
	S_t+uS_r=0.
\eeq

The system \eq{m_eul}-\eq{mom_eul}-\eq{ener_eul} admits self-similar solutions 
\cites{laz,gud,cf,sed,stan,rj}; we follow the setup in \cites{laz,cf} and posit 
\beq\label{sim_vars}
	x=\textstyle\frac{t}{r^\lambda},\qquad \rho(t,r)=r^\kappa R(x),\qquad 
	u(t,r)=-\frac{r^{1-\lambda}}{\lambda}\frac{V(x)}{x}, \qquad 
	c(t,r)=-\frac{r^{1-\lambda}}{\lambda}\frac{C(x)}{x}.
\eeq
The similarity variables $R(x),V(x),C(x)$ then solve an ODE system recorded in 
\eq{R_sim1}-\eq{C_sim1} below. Note that the signs of $t$ and $x$ agree. 

In this work we are seeking solutions to 
\eq{m_eul}-\eq{mom_eul}-\eq{ener_eul} that are defined for all $t<0$ and all $r\geq0$,
and which contain a shrinking vacuum region of the form $\{-\infty<x\leq x_0\}$, 
where $x_0<0$ is to be identified; see Figure \ref{r_t_plane}. The similarity variables
$R(x),V(x),C(x)$ are to be determined for all $x\in(x_0,0)$ (the fluid region).

\begin{figure}
	\centering
	\includegraphics[width=9cm,height=8.5cm]{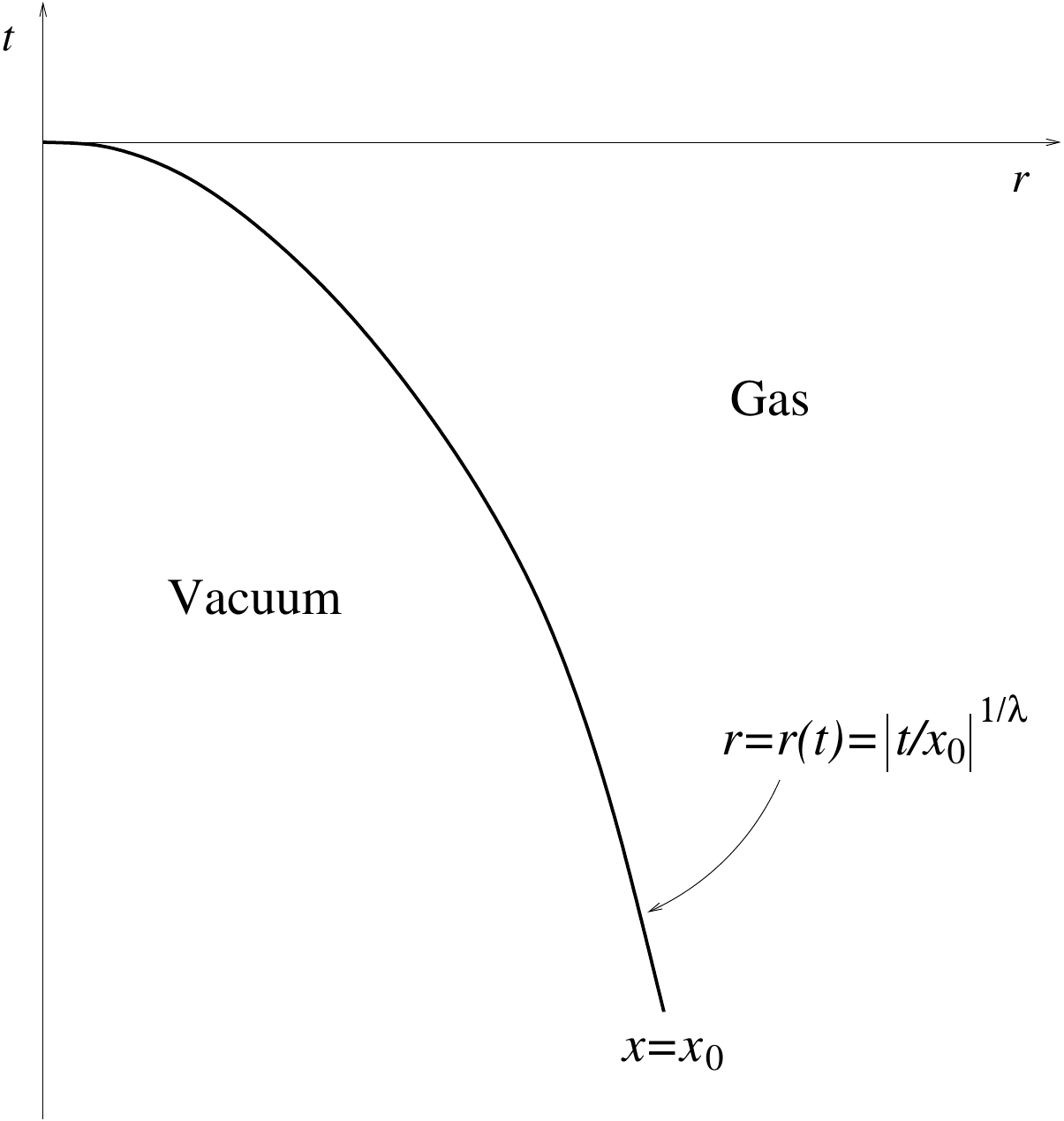}
	\caption{Physical configuration in self-similar cavity flow.}\label{r_t_plane}
\end{figure} 

The similarity parameters $\kappa$ and $\lambda$ are a priori free. We shall exploit this
freedom to impose a number of conditions on the parameters $n,\gamma,\lambda,\kappa$.
Some of these are dictated by physical constraints (integrability, boundary conditions); 
others are imposed to simplify later arguments.

The pioneering work of Guderley \cite{gud} demonstrated that the similarity variables
$V$, $C$ solve a single decoupled ODE (Eqn.\ \eq{CV_ode} below), so that the problem
of constructing radial self-similar Euler flows can be analyzed effectively through a phase
plane analysis. The density variable $R(x)$ is obtained from $V(x),C(x)$
via an exact integral (analyzed in Section \ref{adiabatic_int}). 
In this way Guderley gave examples of solutions containing a 
converging-diverging shock wave and suffering amplitude blowup in $p,c,u$ at time of 
collapse $t=0$.
A mathematically rigorous justification for this type of solutions has been provided only 
recently, \cites{jls1,jls2}.

Employing a similar setup, Hunter \cite{hun_60} (see also \cite{bk}) considered the 
case of a spherical or cylindrical cavity (vacuum region) surrounded by fluid. 
A phase plane analysis demonstrates the existence of cavity flows in which the 
vacuum-fluid interface accelerates and collapses
with infinite speed. This type of solutions were later extended beyond collapse 
in the comprehensive work of Lazarus \cite{laz}, who treated both the shock and cavity 
problems. For the case of cavity flows, the works \cites{hun_60,bk,laz} restricted attention to 
the {\em isentropic} regime where the specific entropy $S$ takes a globally constant value. 

Our goal in this work is to extend the analysis to the {\em non-isentropic} regime and rigorously  
establish the existence of self-similar Euler flows in which a central cavity 
is driven to collapse by a surrounding gas flow with a variable entropy field.

Before starting the analysis proper we review the issue of Euler flows in the presence 
of a vacuum. This issue has attracted considerable attention lately (see references below).

\subsection{Vacuum in Euler flow}\label{vac}
\subsubsection{General pressure law}
Consider a  situation where a gas governed by the Euler equations 
\eq{mass_m_d_full_eul}-\eq{energy_m_d_full_eul} (with a general pressure law $p$)
occupies a dynamically changing region $\Omega(t)\subset\RR^3_{\bf x}$ and 
with a vacuum outside of $\Omega(t)$.

For the radial self-similar solutions considered later, the fluid region will be of the form
\beq\label{our_Omega}
	\Omega(t)=\RR^3\smallsetminus \bar B_{r_0(t)}({\bf 0}),
\eeq
where the radius of the spherical vacuum region is given by
\beq\label{r_0}
	r_0(t)=(\textstyle\frac{t}{x_0})^\frac{1}{\lambda},
\eeq
where $\lambda>1$ and $x_0<0$ are constants, and $t<0$. See Figure \ref{r_t_plane}.
 In this case the 
interface $\partial \Omega(t)$ is a converging sphere which collapses with 
infinite speed at the origin at time $t=0$. Note that determining $x_0$
(really, its finiteness) is part of the problem.

In general $\Omega(t)$ is taken to be an 
open and smooth set and $\rho(t,{\bf x})>0$ whenever 
${\bf x}\in\Omega(t)$. A priori, the only restriction imposed on $\rho$ along
$\partial\Omega(t)$ is that $\rho\geq0$ there. We assume that the 
flow variables $\rho, \bf{u}$ etc.\ are at least $C^1$ within $\cup_t\Omega(t)$.

Two basic boundary conditions are imposed along the vacuum
interface $\partial\Omega(t)$: the pressure should vanish as the interface is approached from 
within the gas (according to the normal stress balance condition),
and the interface is a material surface (consisting of the same fluid particles at all times). 
Thus,
\beq\label{gen_bc}
	p=0\quad\text{and}\quad \mathcal V(\partial\Omega(t))={\bf u}\cdot{\bf n}\qquad
	\text{along $\partial\Omega(t)$},
\eeq
where ${\bf n}$ denotes the exterior unit normal vector along $\partial\Omega(t)$, and 
$\mathcal V(\partial\Omega(t))$ is the normal speed of the surface $\partial\Omega(t)$.
These serve as boundary conditions that must be taken into account in the determination 
of the flow. This is therefore a free-boundary problem where 
determining the location of $\partial\Omega(t)$ is part of the problem.

In general no further boundary conditions are imposed. However, it is clear that in many/most cases 
a vacuum interface would move, due to pressure forces within the fluid, 
with a  {\em finite and non-vanishing normal acceleration}.
This is referred to as the {\em physical boundary condition} (or the 
vacuum free-boundary condition) along the interface.

Denoting the material derivative by $\textstyle\frac{d}{dt}=\partial_t+{\bf u}\cdot\nabla_{\bf x},$
the momentum equation \eq{mom_m_d_full_eul} takes the form
\[\textstyle\frac{d {\bf u}}{dt} +\frac{1}{\rho}\grad_{\bf  x} p={\bf 0}.\]
It follows that the normal acceleration of the interface 
is given by
\beq\label{norm_acc}
	\textstyle\frac{d{\bf u}}{dt}\big|_{\partial\Omega(t)}\cdot {\bf n}
	=-[(\frac{1}{\rho}\grad_{\bf x} p)\big|_{\partial\Omega(t)}]\cdot {\bf n}
	\equiv -(\frac{1}{\rho}\textstyle\frac{\partial p}{\partial n})\big|_{\partial\Omega(t)},
\eeq
where $\frac{\partial}{\partial n}$ denotes the directional derivative along ${\bf n}$.
Since $p\geq0$, $\rho>0$ within $\Omega(t)$ and $p$ vanishes along $\partial\Omega(t)$, 
we have
\beq\label{norm_acc_2}
	\textstyle\frac{d{\bf u}}{dt}\big|_{\partial\Omega(t)}\cdot {\bf n}\geq0.
\eeq
This is physically evident: In Euler flow, and in the absence of external forces, 
the only force is the internal (positive) pressure in the fluid; this will necessarily tend to 
accelerate a vacuum interface, if at all, in the direction of its exterior normal.
It follows from \eq{norm_acc} and  \eq{norm_acc_2} that the physical boundary condition 
amounts to the requirement that
\beq\label{phys_bc}
	-\infty<(\textstyle\frac{1}{\rho}\frac{\partial p}{\partial n})\big|_{\partial\Omega(t)}<0.
\eeq
\begin{remark}
	Strictly speaking the term in the middle of \eq{phys_bc} should be understood 
	in the sense of a limit as $\partial\Omega(t)$ is approached from within
	$\Omega(t)$.
\end{remark}
\begin{remark}
	The physical boundary condition \eq{phys_bc} is formulated only
	in terms of  thermodynamic quantities along the interface. 
	However, as time progresses, these are influenced by all flow variables,
	including the velocity field, within the fluid. See \cite{j_24} for a concrete illustration of the 
	impact of the initial velocity distribution in 1-d (generalized) vacuum Riemann 
	problems.
\end{remark}
\begin{remark}
	To call \eq{phys_bc} \underline{the} physical boundary 
	condition is a little misleading as there are physically relevant flows where 
	the vacuum interface propagates does not accelerate. Examples include 
	vacuum Riemann problems in 1-d \cites{gb,j_24}, as well as 
	certain expanding multi-d flows \cites{gras_98,ser_15}.
\end{remark}

\subsubsection{Ideal gas law}\label{vac_ideal_gas}
We now specialize to the case of an ideal polytropic gas \eq{p_e}. 
According to \eq{sound_speed} we then have $p\propto \rho c^2\propto \rho\eps$,
and there are, a priori, three ways in which the pressure may vanish as the vacuum
interface is approached from within the gas:
\begin{itemize}
	\item[(I)] $\eps\to0$, $\rho\to0$;
	\item[(II)] $\eps\to0$, $\rho\not\to0$;
	\item[(III)] $\eps\not\to0$, $\rho\to0$.
\end{itemize}
It is of interest to also consider the behavior of the specific entropy near vacuum. 
Gibbs' relation implies that the pressure and internal energy are given by
\beq\label{eos_p_e}
	p(\rho,S)=\bar\eps(\gamma-1)\rho^\gamma e^\frac{S}{c_v}\qquad\text{and}\qquad
	\eps(\rho,S)=\bar\eps\rho^{\gamma-1}e^\frac{S}{c_v} 
	\qquad \text{($\bar\eps>0$ constant)}.
\eeq
\begin{remark}[The physical boundary condition in isentropic flow]\label{isntr_vac}
            With $S$ taking a constant value throughout the gas, we have
            $p\propto\rho^\gamma$ and $\eps\propto \rho^{\gamma-1}$, and {\em (I)}
            is the only possible behavior at a vacuum interface. As 
            $\grad_{\bf x}p\equiv \frac{\rho}{\gamma-1} \grad_{\bf x}(c^2)$ in this case, the 
            physical boundary condition \eq{phys_bc} takes the form 
            \beq\label{phys_bc_isntr}
                	-\infty<\textstyle\frac{\partial (c^2)}{\partial n}\big|_{\partial\Omega(t)}<0.
		\qquad\text{(Isentropic)}
            \eeq
            As stressed in \cites{liu_96,mak_86}, \eq{phys_bc_isntr} implies that
            $c$ suffers a square root singularity at an accelerating interface. 
            In turn, this renders local well-posedness a challenging issue that has 
            been analyzed in a number of works; see \cites{cs2,jm,jm2,it_24} for  
            overviews, references, and recent results. We note that the situation when 
            the physical boundary condition is violated by the initial data has not been 
            fully resolved; see \cites{jm_12,ly1,ly}.
\end{remark}

%
We note that Case (I) can also occur in non-isentropic flows; 
examples are provided by non-isentropic affine
flows as analyzed in \cites{sid_17,rhj_21}, and also by the self-similar solutions 
considered in the present work (see Remark \ref{rho_S_bhvrs}). 

The class of affine flows also gives examples of the behavior in Case (II), see \cite{ri_21}. 
Note that, according to \eq{eos_p_e}, the specific entropy $S$ must then 
necessarily tend to $-\infty$ at the interface. On the other hand, 
the total amount of entropy in the fluid (i.e., the 
integral of $\rho(t,{\bf x})S(t,{\bf x})$ over $\Omega(t)$)
might well be bounded, as is the case for the flows in \cite{ri_21}.

We are not aware of examples where Case (III) occurs; again by \eq{eos_p_e},
we must have $S\to+\infty$ at the interface in this case.

Next, consider the physical boundary condition \eq{phys_bc} in non-isentropic flow.
According to \eq{p_e} we have $p=(\gamma-1)\rho\eps$, so that 
\[\textstyle\frac{1}{\rho}\grad_{\bf x}p
\propto\textstyle\frac{\eps}{\rho}\grad_{\bf x}\rho+\grad_{\bf x}\eps,\]
and \eq{phys_bc} takes the form
\beq\label{phys_bc_non_isntr_1}
    	-\infty<\big(\textstyle\frac{\eps}{\rho}\frac{\partial \rho}{\partial n}
	+\frac{\partial \eps}{\partial n}\big)\big|_{\partial\Omega(t)}<0.
\eeq
We note that, while $p\propto \rho\eps$ vanishes along $\partial\Omega(t)$, it is not  
clear what the behavior of the coefficient 
$\frac{\eps}{\rho}\big|_{\partial\Omega(t)}$ 
will be.\footnote{As deduced above, the boundary condition \eq{phys_bc_non_isntr_1} expresses precisely
that the normal acceleration of the 
interface is bounded and non-vanishing. The works \cites{sid_17,rhj_21,ri_21}
formulate the boundary condition somewhat differently; however, the affine 
flows constructed in these works do satisfy \eq{phys_bc_non_isntr_1}.}

Finally, for the self-similar flows we construct below, the physical boundary 
condition \eq{phys_bc} will be satisfied by construction. Indeed, with $\Omega(t)$ 
given by \eq{our_Omega}-\eq{r_0}, it is immediate that $\partial\Omega(t)$
moves with a non-vanishing and finite normal acceleration (prior to collapse).

\section{Outline and main result}\label{outline}
We now return to multi-d self-similar Euler flows of the form \eq{sim_vars},
with ideal pressure law \eq{p_e}, in which a spherical vacuum region $\bar B_{r_0(t)}({\bf 0})$
is invaded by the gas and ``collapses'' at time 
$t=0$. Earlier works \cites{bk,hun_60,laz} provide isentropic cavity
flows; our main interest is to rigorously obtain 
non-isentropic flows of this type. 

As noted in Section \ref{setup_goal}, self-similarity essentially reduces the 
problem to a phase-plane analysis in the $(V,C)$-plane. The challenge is to 
prove the existence of suitable trajectories $x\mapsto (V(x),C(x))$
such that the corresponding flow variables, defined via \eq{sim_vars},
provide a physically meaningful solution of the sought-for type
(in our case, a collapsing cavity). In particular, this entails a 
detailed study of some of the critical points of \eq{CV_ode} in the $(V,C)$-plane.

We shall impose several inter-related conditions 
on the parameters $n, \gamma, \lambda, \kappa$; these are labeled (A)-(J)
below. The conditions are of different types: Some guarantee the basic 
physical constraint of local integrability of conserved quantities, and some are 
dictated by the boundary conditions discussed above. {\em The analysis of the 
latter in the present non-isentropic setting is markedly more involved than in the 
isentropic case} (see Sections \ref{bndry_conds} and \ref{P2}).

Finally, we impose several conditions to ensure that the phase portrait of \eq{CV_ode}
possesses suitable trapping regions. This involves an analysis of the nullclines 
for the reduced similarity ODE and their intersections 
(Sections \ref{GF=0} and \ref{upper_crit_points}).
With all conditions met it is routine to argue that there exist suitable 
ODE trajectories $x\mapsto (V(x),C(x))$ whose corresponding Euler flows
are of the desired type (non-isentropic cavity flow); see Section \ref{conclude}.

The conditions (A)-(J) take the form of algebraic inequalities involving 
$n, \gamma, \lambda, \kappa$. While some of these are straightforward to state, 
others are rather involved. We have not attempted a detailed characterization 
of the allowed values. Instead, some of the conditions are simply stated in 
general terms, and then verified at the end for specific choices of the parameters.
In particular, Section \ref{par_choices} provides physically relevant cases with
$n=2,3$ and $\gamma=\frac{5}{3}, \frac{7}{5},3$ for which all conditions (A)-(J)
are met.

Our main findings are summarized as follows:

\begin{theorem}\label{thm}
	Consider the 2-d and 3-d Euler equations \eq{mass_m_d_full_eul}-\eq{energy_m_d_full_eul}
	for an ideal polytropic gas with adiabatic index $\gamma>1$.
	This system admits non-isentropic, radial and self-similar solutions 
	of the form \eq{sim_vars} with $\lambda>1$, in which a spherical cavity collapses at the center of motion. 
	
	The constructed flows  exhibit blowup in velocity and sound speed at collapse
	while containing locally bounded amounts of mass, momentum, and energy at all 
	times (including time of collapse). The flows are classical solutions within the 
	fluid region whose pressure vanishes along the fluid-vacuum interface. 
	The interface is a material surface and propagates with a non-vanishing 
	and finite acceleration prior to collapse. Finally, flows of this type exist for 
	$\gamma=\frac{5}{3}, \frac{7}{5},3$ in both 2-d and 3-d.
\end{theorem}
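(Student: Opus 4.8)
The plan is to realize the sought flow as a single orbit of the reduced Guderley system \eqref{CV_ode} and to read off every assertion of the theorem as a property of that orbit. After substituting the ansatz \eqref{sim_vars}, the pair $(V,C)$ decouples and solves \eqref{CV_ode} in the $(V,C)$-plane, while the density profile $R(x)$ is recovered by the quadrature analyzed in Section \ref{adiabatic_int}. A collapsing cavity corresponds to an orbit $x\mapsto(V(x),C(x))$ defined for all $x\in(x_0,0)$, with the endpoint $x=x_0$ representing the fluid--vacuum interface and the limit $x\to0^-$ representing the instant of collapse (where the $1/x$ factors in \eqref{sim_vars} force the desired blowup of $u$ and $c$). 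I would first translate the physical requirements into conditions on this orbit and on $R$, then use a nullcline/trapping-region analysis to produce the orbit, and finally check the resulting algebraic inequalities in the stated cases.

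First I would fix the endpoint at the interface. Since the interface sits at the constant value $x=x_0\neq0$, the condition $p=0$ there is equivalent to $C(x_0)=0$ (as $p\propto R\,C^2/x^2$ up to a positive $r$-power), while matching the constant-$x$ curve $r_0(t)=(t/x_0)^{1/\lambda}$ to a particle path $\dot r=u$ forces $V(x_0)=-1$. Hence the orbit must terminate at the critical point $P_2=(-1,0)$ of \eqref{CV_ode}, and the material-surface property is then built in. A direct differentiation of $r_0(t)$ shows that, for $\lambda>1$ and $t<0$, the interface moves with finite, non-vanishing acceleration, so the kinematic content of \eqref{phys_bc} holds automatically. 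The genuinely non-isentropic issue is the thermodynamic content encoded in \eqref{phys_bc_non_isntr_1}: because $R$ is \emph{not} slaved to $C$ (as it would be under a global entropy constant), I must separately analyze the $R$-quadrature near $P_2$ to pin down the vanishing rates of $\rho$ and $\eps$ — in particular the ratio $\eps/\rho$ at the interface — so that the pressure vanishes with the correct rate and mass, momentum, and energy remain locally integrable. I expect this to be the crux, and it is exactly what Sections \ref{bndry_conds} and \ref{P2} are set up to handle.

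With the endpoints understood, I would linearize \eqref{CV_ode} at $P_2$ and at the critical point governing the $x\to0^-$ limit, extracting eigenvalues and eigendirections to determine which orbits enter or leave along admissible directions and what asymptotics they carry. Then I would study the two nullclines $F=0$ and $G=0$ of the field, locate their intersections (the critical points), and fix the sign of the vector field in each subregion of the relevant portion of the upper half-plane $C>0$ (Sections \ref{GF=0}, \ref{upper_crit_points}). The conditions (A)--(J) are precisely the algebraic inequalities on $n,\gamma,\lambda,\kappa$ that (i) make the integrability exponents work at $r\to0$ and at the interface, (ii) force the correct ordering and nondegeneracy of the critical points, and (iii) pin down the sign pattern that lets me enclose the desired connection inside an invariant trapping region bounded by nullcline arcs and invariant lines.

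Given such a trapping region, a routine invariance-plus-monotonicity (or shooting) argument yields an orbit defined on all of $(x_0,0)$ joining the collapse point to $P_2$; I would then recover $R>0$ from the quadrature, verify the blowup of $u$ and $c$ at $x=0$ together with local boundedness of the conserved densities, and confirm that $P_2$ is attained with $R$ and $C$ realizing the Case-(I) vacuum behavior at the rates demanded by \eqref{phys_bc_non_isntr_1}. The proof closes by explicit verification of (A)--(J) for $n=2,3$ and $\gamma=\tfrac53,\tfrac75,3$ (Section \ref{par_choices}), which is computational once the inequalities are in hand. The main obstacle, as emphasized above, is the interface analysis at $P_2$: decoupling the density forces a delicate, case-sensitive control of the $R$-quadrature and of the linearization there, with no isentropic shortcut available, so that the orbit both realizes the vacuum behavior at $P_2$ and still closes up inside the trapping region.
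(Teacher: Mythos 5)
Your proposal follows the same route as the paper: reduce to the $(V,C)$ phase plane, recover $R$ from the adiabatic integral, pin the interface at $P_2=(-1,0)$ and the collapse at the origin, build the connecting orbit by a nullcline/trapping-region argument under algebraic conditions on $n,\gamma,\lambda,\kappa$, and verify those conditions for the listed cases. You also correctly identify the crux — that in the non-isentropic setting $C(x_0)=0$ alone does not give $p\to0$, so the vanishing rate of $R$ near $P_2$ must be extracted from the quadrature. Two steps of your plan, however, would fail as written. First, you propose to ``linearize \eqref{CV_ode} at $P_2$ \ldots extracting eigenvalues and eigendirections.'' $P_2$ is a \emph{degenerate} critical point: even after the substitution $(W,Z)=(1+V,C^2)$ the right-hand side of \eqref{ZW_ode} has no nontrivial linear part, so there are no eigenvalues to extract. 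The paper instead invokes the Nemytskii--Stepanov theory of integral rays for the quadratic truncation \eqref{ZW_ode_trunc}, finds three admissible rays ($\phi=0$, $\pi/2$, $\arctan\sigma$), and then uses the adiabatic integral to compute the pressure exponent along each: the vertical ray gives $p=O(1)W^0$ (pressure \emph{not} vanishing), and only the ray of slope $\sigma=\gamma\mu/(\kappa+n)$, which under condition (B) carries a unique trajectory, yields $p\to0$. This selection mechanism is the heart of the non-isentropic argument and is absent from your plan.

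Second, your trapping region ``bounded by nullcline arcs and invariant lines'' ignores the sonic line $L_+=\{C=1+V\}$, which is not invariant and which the orbit \emph{must} cross (it leaves $P_2$ above $L_+$ and must reach $P_1$ below it). Because of the identity \eqref{non_obvious_reln}, a crossing of $L_+$ at a point where $F,G\neq0$ reverses the direction of the $x$-parametrization and destroys the solution of the original system \eqref{V_sim2}--\eqref{C_sim2}; the crossing is only admissible at a triple point where $F=G=D=0$. This is why the paper needs the critical point $P_6\in L_+$ to exist, to be the leftmost intersection of the nullclines (conditions (G)--(H)), and to be a node approached along its primary direction (conditions (I)--(J)), so that infinitely many trajectories continue through it into the second trapping region. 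Without this your invariance-plus-monotonicity argument cannot close; any proof along these lines must route the orbit through $P_6$.
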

\begin{remark}
	By construction the vacuum interface in these flows propagates along a curve 
	$\frac{t}{r^\lambda}\equiv x_0$ (const.\ $<0$). 
	The blowup in $u,c$ is then an immediate consequence
        of our standing assumption that $\lambda>1$ in \eq{sim_vars}: 
        With a fixed $x$-value $\bar x\in(x_0,0)$, \eq{sim_vars} gives 
        $u,c\propto |t|^{\frac{1}{\lambda}-1}$ along the curve $r=\bar r(t)=|\frac{t}{\bar x}|^\frac{1}{\lambda}$;
        as $t\uparrow 0$. In contrast, blowup of the density depends on the sign 
        of the similarity parameter $\kappa$ in \eq{sim_vars}, $\kappa<0$ 
        yielding blowup at time of collapse. The concrete cases considered in 
        Section \ref{par_choices} show that density-blowup may or may not occur in 
        non-isentropic cavity flow.
\end{remark}
\begin{remark}
	As noted in Section \ref{vac_ideal_gas}, the works \cites{sid_17,rhj_21,ri_21}
	provide non-isentropic vacuum flows that satisfy the physical boundary 
	condition \eq{phys_bc_non_isntr_1}. These are quite different from the solutions
	described by Theorem \ref{thm}: The fluid region is a compact set which evolves
	according to affine, or near-affine, velocity field, and no blowup occurs. 
\end{remark}

In the next section we  write out the ODEs for the similarity 
variables $R,V,C$, record the reduced similarity ODE for $\frac{dC}{dV}$, and discuss 
the relevant solutions of the latter. Boundary conditions are discussed in Section 
\ref{bndry_conds}, and this analysis makes essential use of the 
adiabatic integral discussed in Section \ref{adiabatic_int}.

With this background we outline, in Section \ref{strategy}, the various steps 
of the construction of suitable trajectories of the similarity ODEs. 
Repeated reference is made to (the schematic) Figure \ref{V_C_plane} which 
displays the sought-for configuration of level sets and trajectories in the 
$(V,C)$-plane. The details of identifying the sufficient conditions (A)-(J) 
for this to hold are given in Sections \ref{lam_kap_constrs}-\ref{upper_crit_points}.
Section \ref{conclude} concludes the proof of the theorem, modulo the 
conditions (A)-(J). Finally, Section \ref{par_choices} provides several 
concrete choices of $n,\gamma,\lambda,\kappa$ for which all of the conditions
are satisfied.

\section{Self-similar, non-isentropic cavity flows}
\subsection{Similarity ODEs and reduced similarity ODE}\label{ss_odes}
The self-similar ansatz \eq{sim_vars} 
reduces \eq{m_eul}-\eq{mom_eul}-\eq{ener_eul}
to a system of three {\em similarity ODEs} for $R(x)$, $V(x)$, $C(x)$.
A calculation shows that these are given by ($'\equiv\frac{d}{dx}$)
\begin{align}
	(1+V)R'+RV'&=\textstyle\frac{\kappa+n}{\lambda x}RV \label{R_sim1}\\
	C^2R'+\gamma R(1+V)V'+2RCC'&=
	\textstyle\frac{1}{\lambda x}[\gamma(\lambda+V)V+(\kappa+2)C^2]R \label{V_sim1}\\
	\textstyle\frac{\gamma-1}{2}CV'+(1+V)C'&=\textstyle\frac{1}{\lambda x}
	[\lambda+(1+\textstyle\frac{n(\gamma-1)}{2})V]C, \label{C_sim1}
\end{align}
where we observe that \eq{C_sim1} does not involve $R$.
An essential observation, due Guderley \cite{gud}, is that $R$ can be 
eliminated from \eq{V_sim1} by using \eq{R_sim1}. We thus obtain two  
ODEs for only $V$ and $C$. Solving these for $V'$ and $C'$ yields
\begin{align}
	V'&=-\textstyle\frac{1}{\lambda x}\frac{G(V,C)}{D(V,C)}\label{V_sim2}\\
	C'&=-\textstyle\frac{1}{\lambda x}\frac{F(V,C)}{D(V,C)},\label{C_sim2}
\end{align}
with 
\begin{align}
	D(V,C)&=(1+V)^2-C^2\label{D}\\
	G(V,C)&=nC^2(V-V_*)-V(1+V)(\lambda+V)\label{G}\\
	F(V,C)&=C\big[C^2\big(1+\textstyle\frac{\alpha}{1+V}\big)
	-k_1(1+V)^2+k_2(1+V)-k_3\big],\label{F}
\end{align}
where
\beq\label{V_*}
	V_*=\textstyle\frac{\kappa-2(\lambda-1)}{n\gamma},
\eeq
\beq\label{alpha}
	\alpha=\textstyle\frac{1}{\gamma}[(\lambda-1)+\frac{\kappa}{2}(\gamma-1)],
\eeq
and
\beq\label{ks}
	k_1=1+{\textstyle\frac{(n-1)(\gamma-1)}{2}},\qquad 
	k_2={\textstyle\frac{(n-1)(\gamma-1)+(\gamma-3)(\lambda-1)}{2}},\qquad
	k_3=\textstyle\frac{(\gamma-1)(\lambda-1)}{2}.
\eeq
We note that 
\beq\label{sum_k}
	k_1-k_2+k_3=\lambda.
\eeq
Finally, \eq{V_sim2}-\eq{C_sim2} give the single {\em reduced similarity ODE}
\beq\label{CV_ode}
	\textstyle\frac{dC}{dV}=\frac{F(V,C)}{G(V,C)}
\eeq
which relates $V$ and $C$ along self-similar Euler flows. 

Although the phase plane analysis of \eq{CV_ode}
is central to the analysis, at the end of the day we need suitable solutions 
of \eq{V_sim2}-\eq{C_sim2}. This system is not equivalent
to the single ODE \eq{CV_ode}: Trajectories of \eq{CV_ode} cross the 
{\em critical (sonic) lines}
\beq\label{crit_lines}
	L_\pm:=\{C=\pm(1+V)\}
\eeq
smoothly, while \eq{V_sim2}-\eq{C_sim2} degenerates along $L_\pm$.
Direct calculation shows that
\beq\label{non_obvious_reln}
	F(V,\pm(1+V))\equiv \mp\textstyle\frac{(\gamma-1)}{2}G(V,\pm(1+V)),
\eeq
It follows that, if a trajectory $\mathcal T$ of \eq{CV_ode} 
crosses a critical line at a point $P$ where one (and hence both) of $F$ and $G$ 
is non-zero, then the flow of \eq{V_sim2}-\eq{C_sim2} along $\mathcal T$ is directed in 
opposite directions on either side of the critical line at $P$. 
Such a trajectory $\mathcal T$ cannot be used to generate a physically meaningful Euler flow. 

This circumstance severely limits the set of relevant trajectories: Any continuous crossing 
of one of the critical lines $L_\pm$ by a trajectory of  \eq{CV_ode} must occur at 
a ``triple point'' where $F=G=D=0$ in order for the trajectory be relevant as a trajectory 
of the original system \eq{V_sim2}-\eq{C_sim2}. As we shall see below, for the cavity flow problem,
the trajectories of interest must necessarily cross $L_+$.

%

\subsection{Boundary conditions}\label{bndry_conds}
We next discuss the two general boundary conditions that must be met along a vacuum interface:
the interface must be a material surface (i.e., made up of particle trajectories) and
the pressure must vanish within the fluid as the interface is approached. 
We proceed to deduce how, for self-similar radial flows of the form \eq{sim_vars},
these conditions yield starting values for the similarity ODEs \eq{V_sim2}-\eq{C_sim2}.
(Recall that the {\em physical} boundary condition discussed in Section \ref{vac} will be 
automatically satisfied for the self-similar cavity flows under consideration.)

For radial cavity flows of the form \eq{sim_vars}, the fluid-vacuum interface is chosen 
to follow a path $x\equiv x_0$, where $x_0<0$ is to be determined. The interface then
follows $r=r_0(t)=(\textstyle\frac{t}{x_0})^\frac{1}{\lambda}$, and this is a particle trajectory 
provided
\[\dot r_0(t)=u(t,r_0(t)).\]
It follows from \eq{sim_vars}${}_3$ that we must have
\beq\label{V_0}
	V(x_0)=-1.
\eeq
The second condition, i.e., vanishing of the pressure along the fluid-vacuum
interface, amounts to
\beq\label{van_press}
	p(t,r)\to0\qquad\text{as $r\downarrow r_0(t)$ for each fixed $t<0$.}
\eeq
With the understanding that time is held fixed in discussing boundary conditions, 
we write this as: $p\to0$ as $x\downarrow x_0$.

As discussed in Remark \ref{isntr_vac}, in {\em isentropic} flow we have 
$c\to0$ if and only if $p\to0$. Vanishing pressure along the interface 
therefore amounts to $C(x_0)=0$. Recalling \eq{V_0}, we must therefore 
start the ODEs \eq{V_sim2}-\eq{C_sim2}, for a finite $x$-value $x_0<0$, at the point $P_2=(-1,0)$. 
We note that $P_2$ is always a triple point for the system \eq{V_sim2}-\eq{C_sim2} (i.e.,
for any choice of parameters), and that
to analyze its behavior near $P_2$ requires some care. 
(Throughout, we follow \cite{laz} for the labeling of critical points.) Already in the isentropic case,
the ODE \eq{CV_ode} degenerates at $P_2$ with the linearization
\[\textstyle\frac{dC}{dV}=-\textstyle\frac{\gamma-1}{2}\frac{C}{1+V}.\]
This issue is circumvented by instead considering the variable $Z:=C^2$; this change of variable 
removes the degeneracy, and the analysis of the ODE for $\frac{dZ}{dV}$ is straightforward
(see Remark \ref{isntr_trunc_ode}).

For {\em non-isentropic} flow the situation is more involved. As noted in Section 
\ref{vac_ideal_gas}, there could possibly exist non-isentropic cavity flows 
in which $c^2\propto\eps$ does not tend to zero along the interface (Case (III)).
In the present work we avoid this issue by  imposing the same 
conditions on $(V,C)$ as in the isentropic case, i.e., 
\beq\label{bcs}
	V(x_0)=-1\qquad\text{and}\qquad C(x_0)=0.
\eeq
As detailed above, \eq{bcs}${}_1$ implies that the
interface is a material surface.
However, in contrast to the isentropic case, it is not clear what \eq{bcs}${}_2$ implies 
about the pressure along the interface in non-isentropic flow. 
More precisely, $C(x_0)=0$ only gives $c\to 0$ as $x\downarrow x_0$, 
so that, for the non-isentropic case under consideration, we must also analyze the 
behavior of $\rho$ in order to conclude that $p\propto \rho c^2\to0$ as $x\downarrow x_0$.

This requires a detailed analysis of \eq{CV_ode} near the triple point $P_2$, cf.\ Section \ref{P2}. 
For the non-isentropic case this analysis is more involved 
as the degeneracy of \eq{CV_ode} at $P_2$ now is stronger than in the isentropic case.
Specifically, the change of variable $C\mapsto Z=C^2$, while still convenient, does not 
remove the degeneracy in the non-isentropic case. We shall apply classic ODE
results from \cite{ns} to describe the behavior 
near $P_2$. In order to select an appropriate trajectory emanating from $P_2$
(in particular, giving $p\to 0$ as $x\downarrow x_0$), 
we shall make use of an exact integral, which we review next.

\subsection{The adiabatic integral}\label{adiabatic_int}
It is well-known that the similarity ODEs 
\eq{R_sim1}-\eq{C_sim1} admit an 
exact ``adiabatic'' integral (see Eqn.\ (2.7) in \cite{laz}, or pp.\ 319-320 in \cite{rj}).
We briefly review its derivation. 

For an ideal polytropic gas, the specific entropy
$S$ is a function of $\rho^{1-\gamma}\eps\propto \rho^{1-\gamma}c^2$ (cf.\ \eq{eos_p_e}${}_2$). 
For a self-similar flow \eq{sim_vars} it follows that
\beq\label{S_sigma}
	S=\text{function of }(r^{-2\gamma\alpha}\sigma(x))\qquad \text{where}\quad
	\sigma(x):=R(x)^{1-\gamma}(\textstyle\frac{C(x)}{x})^2,
\eeq
and $\alpha$ is given by \eq{alpha}.
The equation \eq{spec_entr} for $S$ therefore becomes
\[(1+V)\sigma'+2\gamma\alpha\textstyle\frac{V}{\lambda x}\sigma=0.\]
Multiplying through by $R$, and substituting for $\frac{RV}{\lambda x}$ from 
the right-hand side of \eq{R_sim1}, result in
\[R(1+V)\sigma'+\textstyle\frac{2\gamma\alpha}{\kappa+n}[R(1+V)]'\sigma=0.\]
It follows that
\beq\label{adiab_int}
	[R|1+V|]^qR(x)^{1-\gamma}(\textstyle\frac{C(x)}{x})^2\equiv const. >0,
\eeq  
where 
\beq\label{q}
	q=\textstyle\frac{2\gamma\alpha}{\kappa+n}\equiv
	\frac{1}{\kappa+n}[\kappa(\gamma-1)+2(\lambda-1)].
\eeq 
Conversely, a calculation verifies that if $(V,C)$ solves \eq{V_sim2}-\eq{C_sim2},
and \eq{adiab_int} holds (defining $R$), then the triple $(R,V,C)$ solves the ODE
system \eq{R_sim1}, \eq{V_sim1}, \eq{C_sim1}. Defining $\rho$, $u$, $c$ 
according to \eq{sim_vars}, we thus obtain a corresponding radial and self-similar
Euler flow.

We stress that \eq{adiab_int}, while obtained using \eq{spec_entr} (i.e., constancy 
of $S$ along particle paths), does not imply that $S$ is constant throughout the fluid. 
Rather, thanks to the integral \eq{adiab_int}, we can {\em characterize} isentropic
self-similar flows as those flows \eq{sim_vars} for which $\kappa$ 
takes the  particular value
\beq\label{isentr_kappa}
	\bar\kappa=\bar\kappa(\gamma,\lambda):=-\textstyle\frac{2(\lambda-1)}{\gamma-1}.
\eeq
Indeed, with $\kappa=\bar\kappa$ the constants $\alpha$ and $q$ vanish, 
and the adiabatic integral \eq{adiab_int} reduces to
\beq\label{CR}
	\sigma(x)=R(x)^{1-\gamma}(\textstyle\frac{C(x)}{x})^2\equiv const.
\eeq
According to \eq{S_sigma} (with $\alpha=0$), \eq{CR} implies that the specific entropy 
$S$ in this case takes a constant value throughout the fluid. 
Conversely, if $S$ is globally constant, then \eq{S_sigma} gives $r^{-2\gamma\alpha}\sigma(x)\equiv const.$;
fixing an $x$-value and varying $r$  gives $\alpha=0$, i.e., $\kappa=\bar\kappa$.


\subsection{Strategy}\label{strategy}
With the background above we can specify more precisely how to obtain 
self-similar Euler flows with a collapsing cavity. Assume for now that we have identified a 
``suitable'' solution $C(V)$ of the reduced similarity ODE \eq{CV_ode}, emanating from the 
critical point $P_2=(V_2,C_2)=(-1,0)$. We then solve \eq{V_sim2}
with $C=C(V)$ to obtain $V= V(x)$ and thus $ C(x):=C( V(x))$.  
As discussed above, the solution will be chosen to satisfy the boundary conditions 
\eq{bcs} for some finite $x_0<0$. 
In order that the corresponding Euler flow be defined for all $t<0$ and all $r>r_0(t)$, the solution 
$(V(x), C(x))$ must be defined for all $x\in(x_0,0)$ (cf.\ \eq{sim_vars}${}_1$). 
Let $\Gamma$ denote the corresponding trajectory, i.e., 
\[\Gamma=\{( V(x), C(x))\,|\, x_0<x<0\}.\]

The analysis in Section \ref{P2} will show that $\Gamma$ 
must start out vertically from $P_2$ in the $(V,C)$-plane; in particular,
it is necessarily located above the critical line $L_+$ near $P_2$. We shall want 
$\Gamma$ to move to the right from $P_2$, and, in order to facilitate later arguments, to
be located between the level sets $\{F=0\}$ and $\{G=0\}$ (both of which contain branches
approaching $P_2$ vertically). See Figure \ref{V_C_plane}.
We shall later detail the conditions ensuring this situation.

\begin{figure}
	\centering
	\includegraphics[width=11cm,height=9cm]{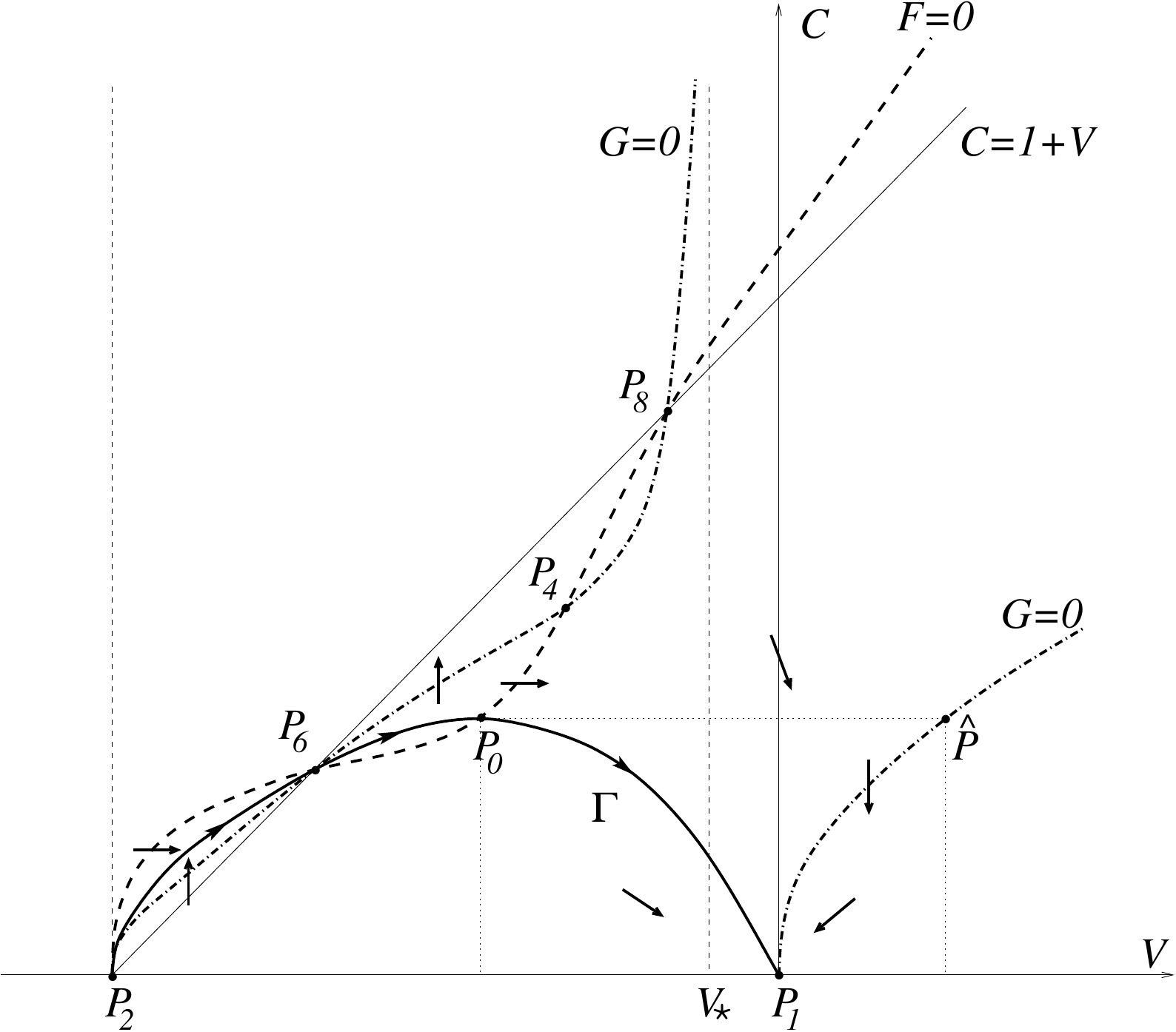}
	\caption{Schematic picture of the sought-for situation in the $(V,C)$-plane. 
	The trajectory $\Gamma$ (solid, thick curve) of a solution to \eq{V_sim2}-\eq{C_sim2} 
	starts out vertically from the point $P_2=(-1,0)$ for $x=x_0<0$ 
	(cf.\ Figure \ref{r_t_plane}).
	As $x$ increases, $\Gamma$ moves to the right, reaching the 
	origin $P_1$ for $x=0$. 
	Within the region $\{V>-1,C>0\}$, $\{F=0\}$ (dashed) consists of a single 
	branch, which is the graph of an increasing function of $V>-1$, while
	$\{G=0\}$ (dash-dot) consists of two branches which are graphs of increasing 
	functions on $(-1,V_*)$ and $(0,\infty)$. The two zero levels intersect 
	at the three critical points $P_4,P_6,P_8$ in the upper half-plane;
	$P_6$ and $P_8$ are located on the critical line $C=1+V$, with $P_6$ being a node.
	Arrows indicate the direction of flow for the original similarity ODEs
	\eq{V_sim2}-\eq{C_sim2} as $x<0$ increases. (As drawn, $\Gamma$ remains within 
	the 2nd quadrant; alternatively, it could cross into the 1st quadrant along
	the $C$-axis. Either way, it must approach $P_1$ as $x\uparrow0$.)
	The rectangle with base along the $V$-axis and vertices $P_0$ and $\hat P$ 
	(dotted)
	will be a trapping region for \eq{V_sim2}-\eq{C_sim2} in which all trajectories 
	tend to the origin $P_1$.}\label{V_C_plane}
\end{figure} 

Next, it follows from the ansatz \eq{sim_vars}${}_3$-\eq{sim_vars}${}_4$
that we must have $ V(0)= C(0)=0$. More precisely, to obtain a meaningful
(pointwise bounded) flow field 
at time $t=0$, we must have that $\frac{ V(x)}{x}$ and $\frac{ C(x)}{x}$ 
approach finite limits as $x\uparrow 0$. It turns out that the critical point $P_1:=(0,0)$
is a star point (proper node) for \eq{CV_ode} (cf.\ Section \ref{P1}), and this will 
imply the existence of these limits.

Since $\Gamma$ is located above $L_+$ near $P_2$ and approaches the origin $P_1$,
it must cross $L_+$. As discussed earlier, this must occur at a triple point where $F=G=D=0$.
We shall impose several conditions to ensure that this happens in a suitable manner.
First of all we need to guarantee that there {\em is} a triple point along $L_+$ in the 
upper half of the $(V,C)$-plane. Moreover, we shall want this to be a node for \eq{CV_ode},
with a suitable location in the second quadrant.


Assuming now that $ V(x), C(x)$ have been defined for all $x\in(x_0,0)$ 
according to the procedure above, we obtain the corresponding Euler flow as follows.
First, the variables $u(t,r)$ and $c(t,r)$ are defined according to \eq{sim_vars}${}_3$-\eq{sim_vars}${}_4$. 
To determine the corresponding density field, we fix a positive constant 
on the right-hand side of \eq{adiab_int}, solve it for $R(x)$, and define 
$\rho(t,r):=r^\kappa R(x)$. As noted in Section \ref{adiabatic_int}, this will then yield a complete 
Euler flow for $t<0$.

Finally, we need to analyze how the pressure $p$ behaves at the vacuum 
interface in the resulting flow. This is done as follows: The analysis of \eq{CV_ode}
will provide the behavior of $(V(x), C(x))$ when $x\approx x_0$ (i.e., near $P_2$); 
using this information in the adiabatic integral \eq{adiab_int} then gives the behavior 
of $R(x)$; finally, the behavior of $p\propto \rho c^2$ as $x\downarrow x_0$ is 
determined from those of $C(x), R(x)$. This analysis will show that we must impose 
a further condition on the parameters to guarantee that $p\to0$ as $x\downarrow x_0$ 
(see condition (B) in Section \ref{P2}). This part of the argument will make use of 
certain results on ODEs with degenerate critical points. The relevant results are 
available in \cites{ns,hart}; see Section \ref{P2} for precise references.

To make all this work out we will impose ten conditions (A)-(J) in the form of 
strict inequalities 
involving quantities which depend on the parameters $n,\gamma,\lambda,\kappa$ in the 
problem. As mentioned, some of these are quite intricate and we do not attempt to
simplify all of them. Instead we verify them for concrete, and physically relevant, choices of 
the parameters. 

With the conditions (A)-(J) imposed we will have good control on the locations
of the relevant critical points, the level sets $\{F=0\}$ and $\{G=0\}$, and hence 
the direction of flow for the original system of similarity ODEs \eq{V_sim2}-\eq{C_sim2}.
Taken together, this will yield a straightforward trapping-region argument 
for the existence of a trajectory $\Gamma$ connecting $P_2$ to $P_1$ 
whose corresponding Euler flow has vanishing pressure along the vacuum interface
$\{x=x_0\}$; see  Section \ref{conclude} for details. This will provide a rigorous proof for 
the existence of non-isentropic and self-similar Euler cavity flows.

The rest of the paper is organized as follows.
In the next section we record the conditions that guarantee 
locally finite amounts of mass, momentum, and energy in the flow.
(Note that, since there will be a vacuum region present near $r=0$ at times $t<0$,
and the flow variables $\rho$, $\eps$, $u$ are locally bounded prior to collapse, these conditions 
need only be imposed at time of collapse $t=0$.)

The critical points $P_1$ and $P_2$ are analyzed in Section \ref{crit_pts}.
Of these, $P_2$ is the more challenging, and here the non-isentropic 
setting differs significantly from the isentropic case. 
It turns out that there are now several trajectories emanating from $P_2$
that could, potentially, serve to build cavity flows. However, the aforementioned 
analysis of the pressure $p$ as $x\downarrow x_0$, will show
that only one of them will give $p\to0$. This particular trajectory is denoted $\Gamma$.

As noted, the curves $\{F=0\}$ and $\{G=0\}$ meet (with infinite slopes) at $P_2$.
In Section \ref{GF=0} we show how to arrange that $\Gamma$ leaves $P_2$ 
between these zero-levels. It will follow from the form of \eq{V_sim2}-\eq{C_sim2}
(for negative $x$) that $\Gamma$ is trapped between $\{F=0\}$ and $\{G=0\}$ near $P_2$.
Under certain conditions, the zero-levels meet again at three critical points, viz.\ $P_4,P_6,P_8$.
In Section \ref{P4_P6_P8} we analyze their presence and locations.
With this information we can impose conditions guaranteeing that the triple point 
$P_6\in L_+$ is the leftmost of these. This implies that $\Gamma$ connects $P_2$ to $P_6$, 
reaching $P_6$ for some finite $x$-value $x_6\in(x_0,0)$.
It only remains to argue that $\Gamma$ can be continued to connect $P_6$ to the origin $P_1$.
To give a simple argument for this, we impose further conditions to guarantee that $P_6$ is a node
(Section \ref{P_6_node}). There will then be an infinite number of trajectories leaving $P_6$ as $x$
increases from $x_6$. It will be convenient to arrange that $\{F=0\}$ and $\{G=0\}$ 
are both given as graphs of increasing functions of $V$. With these properties, it is then 
straightforward to provide a trapping region containing an infinite number of trajectories 
emanating from $P_6$ and ending up at $P_1=(0,0)$ for $x=0$. This will conclude the proof.


\section{Restricting $\lambda$ and $\kappa$ in terms of $n$ and $\gamma$}
\label{lam_kap_constrs}
We recall that we restrict attention to $\lambda>1$. According to \eq{sim_vars}
this has the consequence that the speed $u$ and sound speed $c$ 
suffer amplitude blowup at collapse. 
%
In what follows we also use
\[\mu:=\lambda-1>0\]
as a parameter.
Next, the condition that the flow contains locally finite amounts of mass, momentum, 
and energy requires that the integrals (where $m:=n-1$)
\beq\label{C1}
	\int_0^{\bar r}\rho(t,r)r^m\, dr,\quad
	\int_0^{\bar r}\rho(t,r)|u(t,r)|r^m\, dr,\quad
	\int_0^{\bar r}\rho(t,r)\left(e(t,r)+\textstyle\frac{1}{2}|u(t,r)|^2\right)r^m\, dr
\eeq
are finite at all times $t$ and for all finite $\bar r>0$.
In particular, we require this to hold at time of collapse $t=0$.
Taking for granted that $\frac{ V(x)}{x},\frac{ C(x)}{x}$ 
approach finite limits as $x\uparrow 0$ (this condition is analyzed in Section \ref{P1}),
the ansatz \eq{sim_vars} implies that we must have $\kappa+n>0$,
$\lambda<1+\kappa+n$, and $\lambda<1+\textstyle\frac{\kappa+n}{2}$.
Under the assumption that $\mu=\lambda-1>0$, the two first constraints follows 
from the third, and we thus require 
\begin{itemize}
	\item[(A)] $0<\mu<\textstyle\frac{\kappa+n}{2}$.
\end{itemize}
For later use we note that (A) and $\gamma>1$ give
\beq\label{1+V*}
	0<\textstyle\frac{n+\kappa-2(\lambda-1)}{n\gamma}
	<\textstyle\frac{n\gamma+\kappa-2(\lambda-1)}{n\gamma}\equiv 1+V_*,
\eeq
where $V_*$ is given in \eq{V_*}.

\section{Critical points along the $V$-axis}\label{crit_pts}
\subsection{Preliminaries}\label{prelim_crit_points}
The critical points of \eq{CV_ode} are the common zeros of the functions $F$ and $G$, 
and their number and locations depend on $n,\gamma,\lambda,\kappa$. 
It turns out that some of these are necessarily located
on the critical lines $L_\pm$; they thus provide triple 
points where trajectories of \eq{V_sim2}-\eq{C_sim2} can cross.

Recall that $\sgn(x)=\sgn(t)$ and that we restrict attention to negative times 
in this work. It follows from \eq{sim_vars}${}_4$, together with the convention $c>0$
that the trajectories under consideration are located in the upper half-plane $\{C\geq 0\}$.

A complete list of the critical points of \eq{CV_ode}
for all values of $(n,\gamma,\lambda,\kappa)$ is available in \cite{jj},
and we shall use some of the expressions from that work. 
Employing the same indexing as in 
 \cites{jj,laz}, the critical points come in two groups: the points
$P_1$, $P_2$, $P_3$ located along the $V$-axis, and up to three points $P_4$, $P_6$, 
$P_8$ located in $\{C> 0\}$.

From \eq{G}-\eq{F} we have $F(V,0)\equiv0$ and 
$G(V,0)=V(1+V)(\lambda+V)$. Since we assume $\lambda>1$, it follows that there
are exactly three critical points located along the $V$-axis:
\[P_1=(0,0)\qquad P_2=(-1,0)\qquad P_3=(-\lambda,0).\] 
The critical point $P_3$ will not play any role in what follows;
we proceed to analyze the behavior of \eq{CV_ode} near 
$P_1$ and $P_2$.

\subsection{The critical point $P_1$}\label{P1}
As explained above, the origin $P_1$ will serve as the 
end point of the sought-for trajectory $\Gamma$ connecting 
$P_2$ to $P_1$. 
For any values of $n,\gamma,\lambda,\kappa$, the 
linearization of \eq{CV_ode} at $P_1$ is
\[\textstyle\frac{dC}{dV}=\frac{C}{V},\]
According to \cite{hart} (Theorem 3.5 (iv), p.\ 218), $P_1$ is  
a proper node (a.k.a.\ a star point or dicritical node): 
Every trajectory approaching $P_1$ does so tangent to some 
straight half-line from $P_1$, and for any such half-line 
$\mathcal L$ there is a unique trajectory $(V,C(V))$ of \eq{CV_ode} that 
approaches $P_1$ tangent to it. 

We next analyze how a solution $(V(x),C(x))$ of the system
\eq{V_sim2}-\eq{C_sim2} behaves as $P_1$ is approached tangent to a half-line
$\mathcal L$. Letting $\ell$ denote the slope of $\mathcal L$, we have 
$C\approx \ell V$ when $V\approx 0$
along the solution in question. Substitution into \eq{V_sim2}-\eq{C_sim2}, and 
use of \eq{sum_k}, give
\[\frac{dV}{dx}\approx \frac V x \qquad\text{and}\qquad \frac{dC}{dx}\approx \frac C x
\qquad\text{as $P_1$ is approached.}\]
It follows that any solution to \eq{V_sim2}-\eq{C_sim2} 
approaching $P_1$ does so as $x\to 0$. Furthermore, 
\beq\label{well_bhvd}
	\nu:=\lim_{x\to0}{\textstyle\frac{V(x)}{x}}=V'(0),\qquad
	\omega:=\lim_{x\to0}\textstyle\frac{C(x)}{x}=C'(0),\qquad\text{and}\qquad 
	\ell=\textstyle\frac{\omega}{\nu}.
\eeq
With the notations in \eq{well_bhvd} we can specify  the velocity and sound speed
at time of collapse in the corresponding Euler flow. Sending $t\to0$
with $r>0$ fixed, \eq{sim_vars} and \eq{well_bhvd} give
\beq\label{uc_at_collapse}
	u(0,r)= -\textstyle\frac{\nu}{\lambda}r^{1-\lambda}
	\qquad\text{and}\qquad
	c(0,r)=-\textstyle\frac{\omega}{\lambda} r^{1-\lambda}.
\eeq
In particular, according to (A), both $u$ and $c$ 
suffer amplitude blowup at the center of motion at collapse.

We note that $\omega=0$ gives $\ell=0$, which corresponds to the two flat
trajectories $C(V)\equiv 0$, $V\gtrless0$, of \eq{CV_ode}. Since the trajectories 
under consideration approach $P_1$ from within $\{C\geq0\}$, we have $\omega\leq0$.
On the other hand, $\nu$ may be of either sign (or vanish) depending on whether the trajectory 
approaches the origin $P_1$ from within the 1st or 2nd quadrant (or vertically).

\subsection{The critical point $P_2$}\label{P2}
With the boundary conditions \eq{bcs} imposed, the critical point $P_2$ corresponds to the 
vacuum interface, and the behavior of trajectories of \eq{CV_ode} 
near $P_2$ is therefore central to building non-isentropic cavity flows.

To analyze the behavior of \eq{CV_ode} near $P_2$ we first change variables to
\[W:=1+V\qquad\text{and}\qquad Z:=C^2.\]
In particular, $P_2$ corresponds to the origin in the $(W,Z)$-plane. 
In terms of $W,Z$ the reduced similarity ODE \eq{CV_ode} takes the form
\beq\label{ZW_ode}
	\textstyle\frac{dZ}{dW}=\frac{2Z[\alpha Z-k_3W+W(Z-k_1W^2+k_2W)]}
	{W[\mu W-nW_*Z+W(nZ-W^2-(\mu-1)W)]},
\eeq
where 
\beq\label{W_star}
	W_*:=1+V_*\qquad\text{and}\qquad\mu=\lambda-1.
\eeq
The origin $P_2$ is a degenerate critical point for \eq{ZW_ode}, and the following 
analysis uses results from Sections 4.2-4.3 in \cite{ns} (pp.\ 81-88)
(similar results are provided in Section 8.4 in \cite{hart}). The issue is to determine the phase 
portrait of \eq{ZW_ode} near $P_2=(0,0)$, and in particular the trajectories approaching $P_2$,
together with their leading order behavior there. 

We first record the truncated version of \eq{ZW_ode} 
in which only quadratic terms are retained, viz.
 \beq\label{ZW_ode_trunc}
	\textstyle\frac{dZ}{dW}=\frac{2Z[\alpha Z-k_3W]}{W[\mu W-nW_*Z]}.
\eeq
According to the analysis on pp.\ 85-87 of \cite{ns}, since the truncated ODE 
\eq{ZW_ode_trunc} differ from the original ODE \eq{ZW_ode} only with terms 
of degree three or higher, their phase portraits are qualitatively the same near $P_2=(0,0)$.
In particular, the directions along which a trajectory can approach $P_2$ agree. 
For the homogeneous ODE \eq{ZW_ode_trunc} these are readily calculated; 
referring to Section 4.2 in \cite{ns} we only record the results: 
\eq{ZW_ode_trunc} has exactly three integral (half) rays at the origin within the 
closed 1st quadrant of the $(W,Z)$-plane; furthermore, these are given by the polar angles
\[\phi_1=0,\qquad \phi_2=\textstyle\frac{\pi}{2}, \qquad\text{and}\qquad 
\phi_3=\arctan(\sigma),\]
where
\beq\label{sigma}
	\sigma:=\textstyle\frac{\gamma\mu}{\kappa+n}.
\eeq
Note that $\phi_3\in(0,\frac{\pi}{2})$ since we assume $\mu=\lambda-1>0$ 
and $\kappa+n>0$ (cf.\ condition (A)).

Still for the truncated ODE  \eq{ZW_ode_trunc}, we next want to determine the behavior 
of trajectories near the integral rays $\phi=\phi_1,\phi_2,\phi_3$. 
For  this we employ the following terminology from \cite{ns}.
If nearby trajectories, parametrized by $\phi(r)$ (where $r$ for now denotes the 
radial polar coordinate in the $(W,Z)$-plane), diverge from $\phi_i$ as $r\to0$,
then the half-ray $\phi=\phi_i$ is called an {\em isolated} ray; if trajectories on both sides 
converge to $\phi_i$ as $r\to0$, it is called a {\em nodal} ray. Theorem 4.22 in \cite{ns} 
provide sufficient conditions for these cases.
Applying this theorem, routine calculations (omitted) yield the following for the truncated 
ODE \eq{ZW_ode_trunc}: 
 \begin{enumerate}
	\item $\phi_1=0$ is necessarily an isolated ray;
	\item $\phi_2=\frac{\pi}{2}$ is an isolated ray provided $\kappa>\bar\kappa$, 
	and it is a nodal ray provided $\kappa<\bar\kappa$ (recall that we consider non-isentropic 
	self-similar flows, so that $\kappa\neq\bar\kappa$, cf.\ Section \ref{adiabatic_int});
	\item $\phi_3=\arctan(\sigma)$ is an isolated ray provided 
	$\mu<\frac{n}{2}(\gamma-1)$, and it is a nodal ray provided $\mu>\frac{n}{2}(\gamma-1)$.
\end{enumerate}
First consider the isolated ray $\phi_1=0$; according to part (e) of Theorem 4.326 in \cite{ns} (pp.\ 87-88),
the original ODE  \eq{ZW_ode} has a unique trajectory approaching $P_2$ tangent to 
the ray $\phi=\phi_1$. However, this is the flat trajectory $Z\equiv0$ (i.e., $C\equiv 0$),
which is irrelevant for the construction of cavity Euler flows.

Next, consider the rays $\phi=\phi_2$ and $\phi=\phi_3$. For each of these, 
and regardless whether it is an isolated or a nodal ray of the truncated ODE \eq{ZW_ode_trunc}, 
parts (d) and (e) of Theorem 4.326 in \cite{ns} guarantee the existence of at least 
one trajectory of  \eq{ZW_ode} which approaches $P_2$ tangent to it.
The precise manner in which $P_2$ is approached along the latter 
trajectories is crucial: only cases in which the pressure $p$ in the corresponding Euler flow 
vanishes as $P_2$ is approached are of physical interest (cf.\ \eq{van_press}).

To analyze this issue we make use of the adiabatic integral \eq{adiab_int}.
We first observe that, if $(W(x),Z(x))$ is a trajectory approaching 
$P_2=(0,0)$ tangent to $\phi=\phi_2$ or $\phi=\phi_3$, then $P_2$ is reached for a 
{\em finite} $x$-value $x_0\in(-\infty,0)$.
(Recall that the present analysis applies to times prior to collapse, so that $t$ and
$x$ take negative values, cf.\ \eq{sim_vars}${}_1$.) To argue for this we consider the ODE for 
$W\equiv1+V$. Using \eq{V_sim2} with $y=\log |x|$ as independent variable, we have
\beq\label{W_x}
	\textstyle\frac{dW}{dy}=-\frac{1}{\lambda}K(W,Z),
\eeq
where
\[K(W,Z)=\textstyle\frac{nZ(W-W_*)-W(W-1)(W+\mu)}{W^2-Z}.\]
In either case, i.e., whether the approach to $P_2$ is vertical ($\phi=\phi_2$, $Z\gg W$ near $P_2$) or 
with a finite slope ($\phi=\phi_3$, $Z\approx \sigma W$ near $P_2$), $K(W,Z)$ tends to a 
non-zero, constant value as $(W,Z)\to P_2$. 
It then follows from \eq{W_x} that $P_2$ is reached for a finite $y$-value, and thus 
for a finite $x$-value $x_0$. 
\begin{remark}
	This $x_0$ gives the location of the vacuum interface 
	in the resulting Euler cavity flow: In the $(r,t)$-plane the interface 
	propagates along $r_0(t)$ given by \eq{r_0}.
\end{remark}
Next recall the adiabatic integral in \eq{adiab_int}, i.e.,
\beq\label{ad_int_WZ}
	R^{1-\gamma+q}W^q \textstyle\frac{Z}{x^2}\equiv const.,
\eeq
where $q$ is given in \eq{q}.
Consider first the case of a trajectory approaching $P_2$ tangent to the 
half-ray $\phi=\phi_3$, so that $Z=O(1)W$ along it near $P_2$.
Since $x\downarrow x_0$ as $P_2$ is approached, \eq{ad_int_WZ} together with finiteness of $x_0$
give
\beq\label{R_apprch}
	R=O(1)W^\frac{q+1}{\gamma-1-q}\qquad\text{as $W\to0+$.}
\eeq
In turn this implies that 
\beq\label{p_apprch}
	p\propto \rho c^2=O(1)RZ=O(1)W^\frac{\gamma}{\gamma-1-q}\qquad\text{as $W\to0+$.}
\eeq
Thus, in order to have $p\to0$ as $x\downarrow x_0$, we must require
\[\gamma-1>q=\textstyle\frac{1}{\kappa+n}[\kappa(\gamma-1)+2\mu].\] 
Using that $\kappa+n>0$ (by (A)), the last requirement amounts to
\begin{itemize}
	\item[(B)] $\mu<\textstyle\frac{n}{2}(\gamma-1)$.
\end{itemize}
Note that according to (3) above, this implies that $\phi=\phi_3$ is an isolated ray 
for the truncated ODE  \eq{ZW_ode_trunc}. According to part (e) of Theorem 4.326 in \cite{ns},
there is then a {\em unique} trajectory of the original ODE \eq{ZW_ode} which approaches $P_2$
tangent to the ray $\phi=\phi_3$.

Summing up the discussion above we have: 

\begin{lemma}\label{isol_ray}
Whenever the requirements {\em (A)}-{\em (B)} are met
there is a unique trajectory $\Gamma$ of \eq{ZW_ode} approaching $P_2=(0,0)$ 
with the positive slope $\sigma=\textstyle\frac{\gamma\mu}{\kappa+n}$
in the $(W,Z)$-plane Furthermore, the pressure  
in the corresponding Euler flow tends to zero at the vacuum interface.
\end{lemma}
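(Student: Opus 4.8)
The plan is to assemble the statement directly from the ray analysis already carried out for the truncated ODE \eq{ZW_ode_trunc} together with the transfer results of \cite{ns}, and then to read off the pressure behavior from the adiabatic integral. Working in the variables $W=1+V$, $Z=C^2$, the critical point $P_2$ sits at the origin. Condition (A) guarantees $\kappa+n>0$ and $\mu>0$, so the slope $\sigma=\frac{\gamma\mu}{\kappa+n}$ in \eq{sigma} is strictly positive and the associated ray $\phi_3=\arctan(\sigma)$ lies in the open first quadrant of the $(W,Z)$-plane.

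For existence and uniqueness I would argue as follows. Condition (B) is precisely $\mu<\frac{n}{2}(\gamma-1)$, which by item~(3) of the ray classification makes $\phi_3$ an \emph{isolated} ray of the truncated ODE \eq{ZW_ode_trunc}. Since \eq{ZW_ode} and \eq{ZW_ode_trunc} differ only by terms of degree $\geq 3$, their phase portraits near $P_2$ agree; applying part~(e) of Theorem~4.326 in \cite{ns} to the isolated ray $\phi_3$ then yields exactly one trajectory of the full ODE \eq{ZW_ode} approaching $P_2$ tangent to $\phi_3$. This is the sought trajectory $\Gamma$, and tangency to $\phi_3$ means $Z/W\to\sigma$, i.e., $\Gamma$ approaches the origin with the positive slope $\sigma$, as asserted.

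It remains to verify that the pressure vanishes along $\Gamma$ at the interface. First I would confirm that $P_2$ is reached at a \emph{finite} value $x_0<0$: rewriting the $W$-equation as \eq{W_x} with $y=\log|x|$, the right-hand side $K(W,Z)$ tends to a nonzero constant as $(W,Z)\to(0,0)$ along $\Gamma$ (using $Z\approx\sigma W$), so $P_2$ is attained for finite $y$ and hence finite $x_0$. Feeding $Z=O(1)W$ and the finiteness of $x_0$ into the adiabatic integral \eq{ad_int_WZ} gives the density estimate \eq{R_apprch}, and thus $p\propto RZ=O(1)\,W^{\gamma/(\gamma-1-q)}$ as in \eq{p_apprch}. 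Condition (B), rewritten via \eq{q} as $\gamma-1>q$, makes this exponent positive, so $p\to0$ as $x\downarrow x_0$.

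The main obstacle is the careful application of the degenerate-critical-point machinery of \cite{ns}: one must check that $\phi_3$ satisfies the hypotheses of the isolated-ray criterion (Theorem~4.22) under condition (B), and that the uniqueness clause of Theorem~4.326(e) genuinely transfers from the truncated to the full ODE despite the neglected cubic terms. The other delicate point is the nonvanishing of $K$ at $P_2$, which underpins both the finiteness of $x_0$ and the power-law behavior of $R$ and $p$; without it the estimate \eq{p_apprch} would fail, and the conclusion $p\to0$ could not be drawn.
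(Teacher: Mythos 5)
Your proposal follows the paper's own argument essentially step for step: the same change of variables to $(W,Z)$, the same classification of the ray $\phi_3=\arctan(\sigma)$ as isolated under condition (B) via Theorem 4.22 of \cite{ns}, the same appeal to Theorem 4.326(e) for uniqueness of the tangent trajectory of the full ODE, and the same use of the finiteness of $x_0$ (via the nonvanishing of $K$ in \eq{W_x}) together with the adiabatic integral \eq{ad_int_WZ} to derive \eq{p_apprch} and conclude $p\to0$ from $\gamma-1>q$. The proposal is correct and matches the paper's proof, including its (acknowledged) reliance on the omitted routine verifications of the hypotheses of the cited ODE theorems.
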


\begin{remark}\label{rho_S_bhvrs}
	For the flow described in Lemma \ref{isol_ray} it follows from \eq{R_apprch}, 
	together with \eq{sim_vars}${}_2$ and finiteness of $x_0$, 
	that $\rho\to 0$ at the vacuum interface if and only if 
	$0<q+1<\gamma$. Here, the right-most inequality is the condition {\em (B)}, 
	while the left-most inequality will be a consequence of condition {\em (C)} below
	(see Lemma \ref{rel_posns}; in fact, with condition {\em (A)} in force, {\em (C)} amounts to $q>0$). 
	Thus, in all Euler cavity flows constructed in this work, the density decays to zero at 
	the vacuum interface.
	
	Finally, for the entropy field $S$, it follows from  \eq{eos_p_e}${}_1$, \eq{R_apprch}, and \eq{p_apprch} that 
	\beq\label{S_apprch}
		e^\frac{S}{c_v}=O(1)W^{-\frac{\gamma q}{\gamma-1-q}}\qquad\text{as $W\to0+$.}
	\eeq
	Conditions {\em (A)-(C)} will therefore imply that that the specific entropy $S$ necessarily 
	tends to $+\infty$ as the interface is approached form within the fluid. On the other hand,
	a calculation shows that the total amount of entropy 
	is locally bounded near the interface at all times (including at collapse), i.e.,
	\[\int_{r_0(t)}^{\bar r}\rho(t,r) S(t,r)\, r^{n-1}dr<\infty \]
	whenever $r_0(t)<\bar r<\infty$ and $t\leq 0$.
\end{remark}

Finally, consider the case when a trajectory approaches $P_2=(0,0)$ in the 
$(W,Z)$-plane tangent to the vertical half-ray 
$\phi=\phi_2=\frac{\pi}{2}$. We shall argue that the pressure in the corresponding 
Euler flow then {\em fails} to satisfy $p\to0$ as $x\downarrow x_0$. Consequently, this 
case is irrelevant for building Euler cavity flows. 

First, while $\phi=\phi_2$ is an integral ray for the truncated ODE \eq{ZW_ode_trunc},
there are cases in which any trajectory of the original ODE \eq{ZW_ode} which approaches
$P_2$ tangent to $\phi=\phi_2$ does so from the left (i.e., from within $\{W<0\}$). 
An inspection of the phase portrait of \eq{ZW_ode} reveals that such a trajectory cannot cross
$\{W=0\}$ and reach $P_1$, and is therefore not relevant for our purposes. 

Next, assume that the ODE \eq{ZW_ode} has a trajectory approaching $P_2$ vertically 
and from within $\{W>0\}$. We then have $Z\gg W$ as $W\to0+$, and \eq{ZW_ode} shows that the 
leading order behavior along the trajectory is given by
\beq\label{a_vert}
	 Z=O(1)W^a\qquad\text{where}\qquad 
	a=-\textstyle\frac{2\alpha}{nW_*}= \textstyle\frac{2\mu+\kappa(\gamma-1)}{2\mu -\kappa-n\gamma}.
\eeq
Recalling the adiabatic integral \eq{ad_int_WZ} and arguing as above, we get that the pressure in the 
corresponding Euler flow, to leading order, satisfies
\[p=O(1) RZ=O(1) W^b\qquad\text{as $W\to0+$, where}\qquad
b=\textstyle\frac{a\gamma+(1-a)q}{\gamma-1-q}.\]
To have $p\to0$ as $W\to0$ we need $b>0$;
however, the $a$-value in \eq{a_vert} gives $b=0$. This shows that the 
$(W,Z)$-trajectory under consideration gives an Euler flow in which the 
pressure fails to satisfy the boundary condition $p=0$ along the vacuum interface.

We conclude that, in seeking a non-isentropic, self-similar Euler flow with a vacuum
interface propagating along the curve $x\equiv x_0\in(-\infty,0)$ prior to collapse, and in which approach to vacuum corresponds
to the $(V,C)$-trajectory approaching $P_2=(-1,0)$, the only possibility is to use the unique trajectory $\Gamma$
described in Lemma \ref{isol_ray} above.

\begin{remark}\label{isntr_trunc_ode}
	In the isentropic case $\kappa=\bar\kappa$, i.e., $\alpha=0$ (see Section \ref{adiabatic_int}),
	the truncated ODE simplifies to 
	\[\textstyle\frac{dZ}{dW}=-\frac{2k_3Z}{\mu W-nW_*Z},\]
	whose analysis is straightforward:  The origin is a 
	saddle point (for $\mu>0$) with two approaching half-ray trajectories within the closed 1st 
	quadrant. These trajectories are the half-rays $\phi=\phi_1$ and $\phi=\phi_3$ identified 
	above (in particular, approaching $P_2$ vertically in the $(W,Z)$-plane 
	is not an option in the isentropic case). 
	The former gives the, for us irrelevant, flat trajectory $\{Z\equiv0\}$, and the other has slope 
	\[\textstyle\frac{dZ}{dW}\equiv\frac{\gamma\mu}{\kappa+n}\big|_{\kappa=\bar\kappa}
	=\frac{\gamma(\gamma-1)\mu}{n(\gamma-1)-2\mu},\]
	cf.\ \cite{laz} (Section 3, p.\ 320). 
	General ODE results \cites{ns,hart} imply that the original ODE \eq{ZW_ode}
	has a trajectory, unique in the upper half-plane $\{Z>0\}$ and approaching $P_2$ with the 
	same slope. This is precisly the trajectory used in \cites{bk,hun_60,laz} to build 
	isentropic cavity flows. 
\end{remark}

The foregoing analysis has used the boundary condition of vanishing pressure 
to identify the unique trajectory $\Gamma$ of \eq{CV_ode} that must be used in generating 
a self-similar cavity flow. The issue now is whether this $\Gamma$ connects 
$P_2$ to the origin $P_1$ in the $(V,C)$-plane, so that it prescribes a complete flow 
for all $t<0$. To analyze this we consider next
how $\Gamma$ is located with respect to the zero-levels of $F$ and $G$.

\section{The level sets $\{G=0\}$ and $\{F=0\}$ and their locations}\label{GF=0}

\subsection{Relative locations of $\Gamma$, $\{G=0\}$, and $\{F=0\}$ near $P_2$}
As noted earlier, we shall want the trajectory $\Gamma$ identified above
to emanate from $P_2$ between the zero levels of $F$ and $G$ (see Figure \ref{V_C_plane}). 
It is convenient to use $(W,Z)$-coordinates so that
the zero levels  are given by (cf.\ \eq{G}, \eq{F})
\beq\label{f(W)}
	Z=f(W):=\textstyle\frac{W(k_1W^2-k_2W+k_3)}{W+\alpha}
\eeq
and 
\beq\label{g(W)}
	Z=g(W):=\textstyle\frac{W(W-1)(W+\mu)}{n(W-W_*)},
\eeq
respectively. 
\begin{lemma}\label{rel_posns}
	Assume the conditions {\em (A)-(B)}, and also
	\begin{itemize}
		\item[(C)] $\kappa>\bar\kappa$ (equivalently, $\alpha>0$),
	\end{itemize}
	are all met. Then the following inequalities hold
	\beq\label{ineq_1}
		0<g'(0)<\sigma<f'(0),
	\eeq
	where $\sigma=\frac{\gamma\mu}{\kappa+n}$ 
	is the slope of $\Gamma$ at $P_2$ in the $(W,Z)$-plane.
	
	Under conditions {\em (A)-(C)} we therefore have a situation in the $(V,C)$-plane where, near $P_2$, 
	$\{F=0\}$ is located to the right of $V=-1$ and above $\Gamma$, which in turn 
	is located above $\{G=0\}$ (see Figure \ref{V_C_plane}).
\end{lemma}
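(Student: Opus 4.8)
The plan is to prove Lemma~\ref{rel_posns} by directly computing the slopes $f'(0)$ and $g'(0)$ of the two zero-level curves at $P_2$ (the origin in the $(W,Z)$-plane) from their explicit expressions \eqref{f(W)} and \eqref{g(W)}, and then comparing all three slopes $g'(0)$, $\sigma$, $f'(0)$ using the standing conditions (A)--(C). Since each of $f$ and $g$ is a rational function vanishing at $W=0$, the slope at the origin is read off as the ratio of the linear coefficient of the numerator to the constant term of the denominator. Concretely, $f(W)=\frac{W(k_1W^2-k_2W+k_3)}{W+\alpha}$ gives $f'(0)=\frac{k_3}{\alpha}$, and $g(W)=\frac{W(W-1)(W+\mu)}{n(W-W_*)}$ gives $g'(0)=\frac{-(-1)(\mu)}{n(-W_*)}=\frac{\mu}{nW_*}$ (using $(W-1)(W+\mu)\to(-1)(\mu)=-\mu$ and $W-W_*\to -W_*$ as $W\to0$). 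These are elementary limits and I would simply record them.

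Next I would verify the three inequalities in \eqref{ineq_1} one at a time, each reducing to a sign condition on the parameters. For $0<g'(0)$: since $g'(0)=\frac{\mu}{nW_*}$ with $\mu>0$ and $n>0$, positivity is equivalent to $W_*=1+V_*>0$, which is exactly the content of \eqref{1+V*} (a consequence of (A) and $\gamma>1$). For $0<f'(0)$: here $f'(0)=\frac{k_3}{\alpha}$ with $k_3=\frac{(\gamma-1)(\lambda-1)}{2}=\frac{(\gamma-1)\mu}{2}>0$, and $\alpha>0$ is precisely condition (C); hence $f'(0)>0$. The substantive comparisons are $g'(0)<\sigma$ and $\sigma<f'(0)$, where $\sigma=\frac{\gamma\mu}{\kappa+n}$. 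For $g'(0)<\sigma$ I would cross-multiply (all denominators are positive under (A)--(C)) to get the equivalent polynomial inequality $\frac{\mu}{nW_*}<\frac{\gamma\mu}{\kappa+n}$, i.e.\ $\kappa+n<n\gamma W_*$; substituting $nW_*=n+\kappa-2\mu+n\gamma-\kappa= n\gamma+n-2\mu$ wait---more carefully, $nW_*=n\gamma(1+V_*)/\gamma$, so I would instead use $n\gamma W_*=n\gamma+\kappa-2\mu$ from \eqref{V_*}, reducing the claim to $\kappa+n<n\gamma+\kappa-2\mu$, i.e.\ $2\mu<n(\gamma-1)$, which is exactly condition (B). Similarly $\sigma<f'(0)$ becomes $\frac{\gamma\mu}{\kappa+n}<\frac{k_3}{\alpha}$; substituting the defining expressions \eqref{alpha} and \eqref{ks} and cross-multiplying should reduce, after cancellation, to a strict inequality that follows from (A) and (C). I expect this last reduction to be the one requiring the most careful algebra.

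The main obstacle is therefore purely computational: keeping the many parameter relations consistent while reducing each slope comparison to one of the named conditions, and in particular confirming that every denominator used in cross-multiplication ($W_*$, $\alpha$, $\kappa+n$) is strictly positive so that the inequalities survive multiplication. Positivity of $\kappa+n$ and $W_*$ comes from (A) via \eqref{1+V*}, and positivity of $\alpha$ is (C) itself; I would flag these sign checks explicitly since the whole comparison chain depends on them. Once \eqref{ineq_1} is established, the geometric conclusion is immediate: by Lemma~\ref{isol_ray} the trajectory $\Gamma$ leaves $P_2$ with slope exactly $\sigma$ in the $(W,Z)$-plane, while $\{G=0\}$ and $\{F=0\}$ leave with slopes $g'(0)$ and $f'(0)$; the ordering $g'(0)<\sigma<f'(0)$ with all slopes positive then places $\Gamma$ strictly between the two zero-levels near the origin, with $\{F=0\}$ uppermost and $\{G=0\}$ lowermost. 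Translating back via $W=1+V$, $Z=C^2$ (a monotone change preserving the vertical ordering in the closed first quadrant) yields the stated configuration in the $(V,C)$-plane: near $P_2$, the curve $\{F=0\}$ lies to the right of $V=-1$ and above $\Gamma$, and $\Gamma$ lies above $\{G=0\}$, matching Figure~\ref{V_C_plane}.
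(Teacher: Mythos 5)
Your proposal follows essentially the same route as the paper: read off $g'(0)=\frac{\mu}{nW_*}$ and $f'(0)=\frac{k_3}{\alpha}$ from \eqref{f(W)}--\eqref{g(W)}, check that the denominators $W_*$, $\alpha$, $\kappa+n$ are positive (via (A) and (C)) so that cross-multiplication is legitimate, and reduce each comparison to one of the named conditions. The only point left dangling is the last inequality $\sigma<\frac{k_3}{\alpha}$, which you guess will follow from (A) and (C); in fact, after substituting \eqref{alpha} and \eqref{ks} and cancelling, $\gamma\mu\alpha<(\kappa+n)k_3$ collapses to $\mu^2<\frac{n(\gamma-1)}{2}\mu$, i.e.\ condition (B) once more --- (A) and (C) enter only through the sign checks --- so the argument closes exactly as you set it up.
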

\begin{proof}
	Using the expressions above for $f$ and $g$ we have that \eq{ineq_1}
	amounts to
	\beq\label{ineq_2}
		0<\textstyle\frac{\mu}{nW_*}<\sigma<\textstyle\frac{k_3}{\alpha}.
	\eeq
	We treat each inequality in turn, from left to right. As $\mu>0$ the first inequality 
	amounts to $W_*=1+V_*>0$, which was shown in Section \ref{lam_kap_constrs}
	to be a consequence of (A) (cf.\ \eq{1+V*}).
	For the second inequality we substitute from \eq{W_star}, \eq{V_*}, and \eq{sigma}, 
	and use $W_*>0$ together with (A) to rearrange it to $\mu<\frac{n(\gamma-1)}{2}$, which is (B).
	Finally, for the rightmost inequality, we first observe that condition (C) amounts to $\alpha>0$
	(cf.\ \eq{alpha} and \eq{isentr_kappa}).
	Together with (A) we therefore have that the third inequality amounts to
	$\gamma\mu\alpha<(\kappa+n)k_3$. Substituting from \eq{alpha} and \eq{ks},
	and rearranging, we again obtain $\mu<\frac{n(\gamma-1)}{2}$, which is (B).
\end{proof}
As a consequence of Lemma \ref{rel_posns}, both $\{F=0\}$ and $\{G=0\}$
approach $P_2$ vertically in the $(V,C)$-plane, with $\Gamma$ located between them.
In order to set up a situation where it is simple to argue that, first, $\Gamma$ reaches a
triple point on the critical line $L_+=\{C=1+V\}$, and second,
continues on to reach the origin $P_1$, it will be convenient that both of the zero-levels
$\{F=0\}$ and $\{G=0\}$ are graphs of increasing functions of $V$. These requirements are 
analyzed in the next two subsections.

\subsection{The level set $\{F=0\}$}\label{F=0} 
%
%
%
%
We have that $\{F=0\}$ is the graph of an increasing function of $V$ if and only if
the function $f(W)$ in \eq{f(W)}  is increasing for $W>0$.
To analyze this we assume, in addition to (A)-(C), the condition that:
\begin{itemize}
	\item[(D)] $k_2>0$, i.e., 
	\[(n-1)(\gamma-1)+(\gamma-3)\mu>0.\]
\end{itemize}
According to \eq{f(W)} we have $f'(W)>0$ if and only if
\[\varphi(W):=2k_1W^3+(3\alpha k_1-k_2)W^2-2\alpha k_2W+\alpha k_3>0.\]
Noting that the leading coefficient $2k_1$ is positive (cf.\ \eq{ks}), we have that $\varphi(W)$ 
is increasing on $(-\infty,W_-)\cup(W_+,\infty)$ and decreasing on $(W_-,W_+)$,
where, according to conditions (C) and (D),
\[W_-=-\alpha<0\qquad\text{and}\qquad W_+=\textstyle\frac{k_2}{3k_1}>0\]
are the roots of $\varphi'(W)=0$. In particular, the local maximum point $W_-$ 
lies to the left of the origin, while the local minimum point $W_+$ lies to the right 
of the origin. It follows that
$\varphi(W)>0$ for all $W>0$ if and only if  $\varphi(W_+)>0$.
We include this as a further condition to be checked:
\begin{itemize}
	\item[(E)] $\varphi(W_+)>0$, i.e.,
	\beq\label{phi_min}
		-\textstyle\frac{k_2^3}{27k_1^2}-\frac{\alpha k_2^2}{3k_1}+\alpha k_3>0.
	\eeq
\end{itemize}
We make one more observation about 
the zero-level set of $F$. In $(V,C)$-coordinates this is given by
\[C^2=\textstyle\frac{(1+V)(k_1(1+V)^2-k_2(1+V)+k_3)}{1+\alpha+V},\]
and the argument above shows that, under conditions (A)-(E), this defines a graph $C=C_F(V)$
where $C_F(V)$ is an increasing function of $V$ for $V>-1$. We note that, since $\alpha>0$
by (C),
the function $C_F(V)$ is defined for all $V>-1$. In particular, $\{F=0\}$ has of a single 
branch within the set $\{(V,C)\,|\, V>-1, C>0\}$ (cf.\ Figure \ref{V_C_plane}).

\subsection{The level set $\{G=0\}$}\label{G=0} 
In this section we analyze the level set $\{G=0\}$ with 
the goal of giving conditions guaranteeing that it is the graph of an increasing function 
of $V$. 
From \eq{G} we have that $G(V,C)=0$ if and only if 
\beq\label{C_sqrd_G}
	C^2=\textstyle\frac{V(1+V)(\lambda+V)}{n(V-V_*)}.
\eeq
Recall that $\lambda>1$ by condition (A), and that \eq{1+V*} 
gives $W_*=1+V_*>0$. In what follows we impose three
additional constraints which are collected in condition (F) below.
The first of these is that $V_*<0$, or according to \eq{V_*}: $\kappa<2\mu$.
It follows that $-\lambda<-1<V_*<0$, so that $\{G=0\}$ has three branches located within 
$\{V\leq-\lambda\}$, $\{-1\leq V< V_*\}$, and $\{V\geq0\}$, respectively. 

The behavior of $\{G=0\}$ for 
$V<-\lambda$ is irrelevant for our purposes, and we proceed to give further conditions 
guaranteeing that $\{G=0\}$ is given as the graph of an increasing function of $V$
when $V\in[-1,V_*)\cup[0,\infty)$. Note that $\{G=0\}$ has a vertical asymptote at $V=V_*$.

Switching again to the variable $W=1+V$, this amounts to specifying conditions guaranteeing that 
the function $g(W)$ in \eq{g(W)} is an increasing function of $W$ for all $W\in[0,W_*)\cup [1,\infty)$.
A direct calculation gives that this is the case if and only if 
\[\psi(W):=2W^3+(\mu-1-3W_*)W^2-2(\mu-1)W_*W+\mu W_*>0\qquad \text{for all $W\in[0,W_*)\cup [1,\infty)$.}\]
$\psi(W)$ is a cubic polynomial with a positive leading coefficient
and with the property that its derivative has two real roots: $\psi'(W)$ vanishes for 
$W=\frac{1}{3}(1-\mu)$ and $W=W_*$. For concreteness we assume
that $\frac{1}{3}(1-\mu)<W_*$ (included in condition (F) below). Under this assumption we thus have:  
$g'(W)>0$ for all $W\in[0,W_*)\cup [1,\infty)$ if and only if $\psi(W_*)>0$. 

According to the analysis above we impose the following condition:
\begin{itemize}
	\item[(F)] $\kappa<2\mu$, $\frac{1}{3}(1-\mu)<W_*$, and $\psi(W_*)=W_*(-W_*^2+(1-\mu)W_*+\mu)>0$.
\end{itemize}
With (F) met we therefore have: the part of $\{G=0\}$ located within $\{(V,C)\,|\, V\geq-1, C>0\}$
consists of two branches which are given as an increasing function $C=C_G(V)$ defined 
for $V\in[-1,V_*)\cup[0,\infty)$; furthermore $C_G(V)\uparrow+\infty$ as $V\uparrow V_*$.

It follows from this last property that the level sets $\{F=0\}$ and $\{G=0\}$ must intersect
at one or more points within the set $\{(V,C)\,|\, -1<V<V_*,C>0\}$ (these points will be determined 
explicitly in Section \ref{P4_P6_P8}). Let the leftmost
of these be denoted $\bar P=(\bar V,\bar C)$ for now (in Figure \ref{V_C_plane}, this is $P_6$). 
Thus, as we move rightward from $P_2$, both of the level sets $\{F=0\}$ and $\{G=0\}$  start out 
from $P_2$ and then intersect again at the point $\bar P=(\bar V,\bar C)$, where $-1<\bar V<V_*$.

We next argue that, with the conditions (A)-(F) met, the region bounded by $\{F=0\}$ and $\{G=0\}$
within the strip $-1<V<\bar V$ is a trapping region for the system of similarity ODEs
\eq{V_sim2}-\eq{C_sim2}  (when $x<0$).

\subsection{First trapping region}\label{1st_trap_regn} 
With conditions (A)-(F) met, we have that $\Gamma$ leaves $P_2=(-1,0)$ 
below $\{F=0\}$ and above $\{G=0\}$, and the latter two level sets
are graphs of increasing functions of $V$. Furthermore, these functions
have infinite $V$-derivatives at $V=-1$, so that $\{F=0\}$, $\{G=0\}$, and
$\Gamma$ are all located above $\{C=1+V\}$ for $V\gtrsim-1$.
In particular $D(V,C)<0$ along $\{F=0\}$ and $\{G=0\}$ near $P_2$.

It follows from these properties, together with the fact that $F$ and $G$ are
proportional along the critical line $\{C=1+V\}$ (cf.\ \eq{non_obvious_reln}),
that the ``next'' intersection point $\bar P=(\bar V, \bar C)$ of $\{F=0\}$ and $\{G=0\}$ (strictly 
to the right of $P_2$), must be located on or above $\{C=1+V\}$. In particular,
for $-1<V<\bar V$, $\{F=0\}$ is located above $\{G=0\}$, which in turn is located above
$\{C=1+V\}$.

Next, it is straightforward to verify from 
\eq{G} that $G$ takes negative values at points within 
$\{(V,C)\,|\, -1\leq V<V_*, C>0\}$ which are located above $\{G=0\}$.
With $x<0$ we therefore have that any trajectory of \eq{V_sim2}-\eq{C_sim2}
located above $\{G=0\}$, is traversed from left to right in the $(V,C)$-plane
as $x$ increases. In particular, $V'(x)>0$ along $\{F=0\}$ for $-1<V<\bar V$.

Similarly, recalling that $\alpha>0$ by condition (C), an inspection of 
\eq{F} shows that $F(V,C)$ takes negative values at points within 
$\{(V,C)\,|\, V> -1, C>0\}$ which are located below $\{F=0\}$.
With $x<0$ it follows that any trajectory of \eq{V_sim2}-\eq{C_sim2}
located below $\{F=0\}$ and above $\{C=1+V\}$, is traversed upward in the $(V,C)$-plane
as $x$ increases. In particular, $C'(x)>0$ along $\{G=0\}$ for $-1<V<\bar V$.

With $C=C_F(V)$ and $C=C_G(V)$ parametrizing $\{F=0\}$ and $\{G=0\}$, respectively,
it  follows from these observations that the set 
\beq\label{trap_1}
	\mathcal T_1:=\{(V,C)\,|\, -1<V<\bar V, C_G(V)<C<C_F(V)\}
\eeq
is a trapping region for \eq{V_sim2}-\eq{C_sim2} (with $x<0$). Consequently,
the trajectory $\Gamma$, which starts out from $P_2$ between  $\{F=0\}$ and $\{G=0\}$,
must necessarily connect $P_2$ to $\bar P$ (cf.\ Figure \ref{V_C_plane}).

The arguments above show that trajectories of \eq{V_sim2}-\eq{C_sim2}
(with $x<0$ and increasing) enter $\mathcal T_1$ vertically along its lower 
boundary $\{G=0\}$ and horizontally along its upper boundary  $\{F=0\}$ (see 
Figure \ref{V_C_plane}). In particular, there are infinitely many trajectories reaching 
$\bar P$ from within $\mathcal T_1$.

In the next section we analyze the remaining critical points of \eq{CV_ode} in the 
upper half-plane (Section \ref{P4_P6_P8}). We then impose conditions guaranteeing that
$\bar P$ is located on $\{C=1+V\}$, and furthermore that this point 
is a nodal point for \eq{CV_ode} (Section \ref{P_6_node}). 
Nodality implies that there are infinitely many trajectories approaching $\bar P$ with 
the same {\em primary} slope, and we shall want $\Gamma$ to be one of these trajectories
(Section \ref{prim_slope}).

\section{Critical points in the upper half-plane}\label{upper_crit_points}
\subsection{The critical points $P_4, P_6, P_8$}\label{P4_P6_P8}
The critical points of \eq{CV_ode} in the upper half-plane are denoted 
$P_4=(V_4,C_4)$, $P_6=(V_6,C_6)$, and $P_8=(V_8,C_8)$ (following \cite{laz}). Of these, $P_6$ and 
$P_8$, when present, are necessarily located along the critical line $L_+=\{C=1+V\}$,
with $P_6$ located to the left of $P_8$ by definition.
These two points thus provide triple points where the trajectory $\Gamma$ can, potentially, cross $L_+$. 

In the previous section we argued that, with conditions (A)-(F) met, the trajectory 
$\Gamma$ necessarily connects $P_2=(-1,0)$ to the first point of intersection $\bar P$ 
between $\{F=0\}$ and $\{G=0\}$, as we move rightward from $P_2$. 
We shall want that $\bar P$ is the point $P_6\in L_+$. Obviously, a necessary condition 
for this is that $P_6$ is present, and this is  condition (G) below. Next, as $V_6<V_8$ 
by definition, the only remaining obstruction to the situation we seek is that the point 
$P_4$ is located to the left of $P_6$. To rule this out we  impose that $V_6<V_4$ 
(this is part condition (H) below, where for convenience we also require $V_4<V_8$;
the situation is then precisely as depicted in  in Figure \ref{V_C_plane}).

With these goals in mind we proceed to identify the 
critical points of \eq{CV_ode} in the upper half-plane. 
Some of the expressions in the following analysis are given in \cites{laz,jj}, 
and we will  simply quote the relevant formulae  from these works. 

These critical points are obtained  by first solving for $C^2$ in terms of $V$ 
from $G(V,C)=0$,  i.e.,
\beq\label{gV}
	C^2=C_G(V)^2:=\textstyle\frac{V(1+V)(\lambda+V)}{n(V-V_*)},
\eeq
and then using the result in $F(V,C)=0$, keeping in mind that we restrict attention to $C>0$.
This gives a cubic equation for $V$ which always has one real root
$V_4$ for any choice of $n,\gamma,\lambda,\kappa$, viz.
\[V_4=-\textstyle\frac{2\lambda}{2+n(\gamma-1)}.\] 
As a consequence, $P_4$ is present if and only if $C_G(V_4)>0$.

The remaining $V$-quadratic has real roots $V_6$ and $V_8$ only in certain 
cases. Provided that  $V_6$ and $V_8$ are real and, in addition, $C_G(V_6), C_G(V_8)>0$, 
we obtain the critical points $P_6$ and $P_8$.
It is convenient to set 
\[V_-:=V_6\qquad\text{and} \qquad  V_+:=V_8.\] 
A direct calculation (cf.\ \cite{jj}) shows that these are given by
\begin{align}
	&V_\pm=\textstyle\frac {1}{2 m \gamma} 
	\Big[(\gamma-2)\mu +\kappa-m\gamma\pm\sqrt{(\gamma-2)^2\mu^2
	-2[\gamma m(\gamma+2)-\kappa(\gamma-2)]\mu+(\gamma m+\kappa)^2}\Big],\label{V_pm}
\end{align}
where $m=n-1$, and $\mu=\lambda-1$. Note that $V_6=V_-<V_+=V_8$ by definition.
It follows from \eq{gV} and \eq{V_pm} that the critical points $P_6$ and $P_8$ are present if and 
only if, first, the radicand in \eq{V_pm} is positive, and second, $C_G(V_\pm)>0$. 
If so, the ODE \eq{CV_ode} has  the critical points 
\beq\label{P_6-P_9}
	P_6=(V_-,C_-)\qquad \text{and}\qquad P_8=(V_+,-C_+),
\eeq
where 
\[C_\pm=\sqrt{C_G(V_\pm)}.\] 
Note that $P_6$ and $P_8$ coalesce whenever $V_+=V_-$ (this does occur for certain values
of the parameters $n,\gamma,\lambda,\kappa$, cf.\ \cite{jj}).
A direct calculation reveals the non-obvious fact that $P_6,P_8\in L_+$, i.e.,
\beq\label{on_L_+-}
	C_\pm^2=(1+V_\pm)^2,
\eeq
so that all three functions $F(V,C)$, $G(V,C)$, $D(V,C)$ vanish at $P_6$ and $P_8$.

We now impose the condition that $P_6$ {\em is} present:
\begin{itemize}
	\item[(G)] $P_6$ is present, i.e., the radicand in \eq{V_pm} is positive and $C_G(V_-)>0$.
\end{itemize}
Note that \eq{g(W)} and \eq{gV} give $C_G(V)\equiv g(1+V)$.
With condition (F) in force, $g(W)$ is increasing so that $C_G(V)$ is an increasing function 
of $V$. It follows from this that condition (G) also implies that $P_8$ is present. 

It will be convenient for later arguments to have $P_4$ located between $P_6$ and $P_8$, 
and we therefore impose the further condition that 
\begin{itemize}
	\item[(H)] $V_-<V_4<V_+$.
\end{itemize}
According to the discussion at the beginning of the present section we thus have:
With conditions (A)-(H) met, the point denoted $\bar P$ above is the  critical point $P_6\in L_+$.
Furthermore, the trajectory $\Gamma$ is trapped between 
$\{F=0\}$ and $\{G=0\}$ for $-1<V<V_-$ and connects $P_2$ to $P_6$.

\subsection{Nodality of  $P_6$}\label{P_6_node}
As noted above we shall want $P_6$ to be a {\em node} for the ODE
\eq{CV_ode}, i.e., the linearization of \eq{CV_ode} at $P_6$  should have two real 
characteristic values of the same sign.

To formulate the conditions for this to be the case we recall some standard 
ODE theory (following the notation in \cite{laz}). 
The linearization of \eq{CV_ode} at any critical point $P_c=(V_c,C_c)$ is 
\beq\label{linzd_ode}
	\textstyle\frac{d\tilde C}{d\tilde V}=\frac{F_V\tilde V+F_C\tilde C}{G_V\tilde V+G_C\tilde C}
\eeq
where $\tilde V=V-V_c$, $\tilde C=C-C_c$, and the partials $F_V, F_C,G_V,G_C$ 
are evaluated at $P_c$. (It is assumed that these partials do not all vanish.)
The {\em Wronskian} $W$ and {\em discriminant} $R^2$ for the ODE \eq{CV_ode} at $P_c$ are defined by
\beq\label{wronsk}
	W:=F_CG_V-F_VG_C
\eeq
and
\beq\label{R_sqrd}
	R^2:=(F_C-G_V)^2+4F_VG_C\equiv (F_C+G_V)^2-4W.
\eeq
In what follows, $R$ denotes the positive square root of $R^2$ whenever the latter is positive.
Assuming for now that $R^2>0$, we define the characteristic directions (or, slopes)
\beq\label{L_12}
	L_{1,2}=\textstyle\frac{1}{2G_C}(F_C-G_V\pm R)
\eeq
and the characteristic values 
\beq\label{E_12}
	E_{1,2}=\textstyle\frac{1}{2G_C}(F_C+G_V\pm R),
\eeq
where the indices and signs are chosen so that 
\beq\label{Es}
	|E_1|<|E_2|,
\eeq
and with the understanding that the signs $\pm$ in \eq{L_12} and in \eq{E_12} agree.
We then have that integrals of \eq{linzd_ode} near $P_c$ 
approach one of the curves
\beq\label{gen_crit_point}
	(\tilde C-L_1\tilde V)^{E_1}=\text{constant}\times (\tilde C-L_2\tilde V)^{E_2}.
\eeq
$L_1$ and $L_2$ are  referred to as the {\em primary} and {\em secondary slopes} 
(or directions), respectively, at $P_c$. We observe that
\beq\label{W_prod}
	W\equiv E_1E_2G_C^2.
\eeq 
Thus, under the assumption that $R^2>0$, the critical point 
$P_c$ is a node (i.e., $\sgn E_1=\sgn E_2$) if and only if $W>0$. 
Finally, when $P_c$ is a node, \eq{gen_crit_point} implies that all but two of the integrals 
of \eq{CV_ode} near $P_c$ approach $P_c$ with the primary slope $L_1$. 
The exceptions are two integrals approaching $P_c$ with the secondary 
slope $L_2$.

Returning to $P_6$ (assumed to be present according to condition (F)), we proceed 
to analyze its nodality. Consider first the Wronskian $W_6$ at $P_6$. According to an 
elegant argument given in  \cite{laz} (p.\ 323), $W_6$ is given by
\beq\label{W_6_eqn}
	W_6=KC_6^2(V_6-V_4)(V_6-V_8)\equiv KC_6^2(V_--V_4)(V_--V_+),
\eeq
where the positive constant $K$ is given by
\beq\label{K}
	K=m(n(\gamma-1)+2).
\eeq
Since $V_-<V_+$ by definition (cf.\ \eq{V_pm}), condition (G) implies that $W_6>0$.
Therefore, under the conditions we have imposed above, $P_6$ is a node provided 
the discriminant $R_6^2$ is positive. We proceed to identify conditions guaranteeing that
this is the case.

As is evident from the formulae for $F$, $G$, theirs partials, and $V_6=V_-$,
the expression for $R_6^2$ is somewhat involved. Instead of writing out the general expression 
for $R^2_6$ in terms of the parameters $n,\gamma,\lambda,\kappa$, we provide 
formulae for the partials of $F$ and $G$ in terms of $V_-$, $C_-$, and then impose 
positivity of $R^2_6$ as a condition to be verified.

While the partials of $F$ and $G$ are readily calculated from \eq{F} and \eq{G}, it is convenient to 
simplify these by exploiting the fact that these are to be evaluated at the triple point $P_6$. In particular,
$D$ and $G$ both vanish at $P_6$ so that we have 
(dropping subscripts, evaluation at $P_6=(V_-,C_-)=(V_-,1+V_-)$ being understood)
\begin{align}
	C&=1+V\label{at_P_6_1}\\
	C^2&=\textstyle\frac{V(1+V)(\lambda+V)}{n(V-V_*)}.\label{at_P_6_2}
\end{align}
Note that these give
\beq\label{at_P_6_3}
	nC(V-V_*)=V(\lambda+V).
\eeq
Using the relations \eq{at_P_6_1}-\eq{at_P_6_3} in the formulae for the partial of $F$ 
and $G$, together with the fact that $F$ vanishes at $P_6$, then yield the expressions
\begin{align}
	F_C&=2C(1+\alpha+V)\label{F_C}\\
	F_V&=C[k_2-\alpha-2k_1(1+V)]\label{F_V}\\
	G_C&=2nC(V-V_*)\label{G_C}\\
	G_V&=C[n(1+V_*)-2V-\lambda].\label{G_V}
\end{align}
We finally state the positivity of the discriminant at $P_6$ as a condition to be verified:
\begin{itemize}
	\item[(I)] The discriminant $R^2$ given in \eq{R_sqrd}, when evaluated at $P_6=(V_-,C_-)$ 
	using \eq{F_C}-\eq{G_V}, is strictly positive.
\end{itemize}
Assuming conditions (A)-(I), we proceed to determine the signs to be used in \eq{E_12}, and thus 
the primary and secondary slopes $L_{1,2}$ at $P_6$. The requirement in \eq{Es} amounts to
\beq\label{signs1}
	|2G_C||E_1|=|F_C+G_V\mp R|<|F_C+G_V\pm R|=|2G_C||E_2|.
\eeq
Nodality of $P_6$ means that $W>0$, so that \eq{R_sqrd} gives 
\beq\label{signs2}
	R<|F_C+G_V|.
\eeq
Also, \eq{F_C} and \eq{G_V}, together with the expressions for $\alpha$ and $V_*$, 
give
\[F_C+G_V=(\kappa+n+1-\mu)C.\]
Since $C_6>0$ it follows from condition (A) that $F_C+G_V>0$ at $P_6$. 
According to \eq{signs2} we thus have $0<R<F_C+G_V$, and it follows from \eq{signs1}
that the minus sign in \eq{E_12} corresponds to $E_1$, and the plus sign to $E_2$.
As a consequence, the primary and secondary slopes $L_{1,2}$ at $P_6$ are given as
\beq\label{primry_secdry_slopes}
	L_1=\textstyle\frac{1}{2G_C}(F_C-G_V- R),\qquad L_2=\textstyle\frac{1}{2G_C}(F_C-G_V+R),
\eeq
respectively.

\subsection{The approach of $\Gamma$ to $P_6$}\label{prim_slope}
As noted above, we shall want $\Gamma$ to approach $P_6$ with the {\em primary}
slope $L_1$ of \eq{V_sim2}-\eq{C_sim2} at $P_6$, so that there are infinitely many other trajectories 
approaching with the same slope. This will be exploited below in arguing that $\Gamma$ can be 
continued to connect $P_6$ to the origin $P_1$ in the $(V,C)$-plane (Section \ref{conclude}). 

Recall that, with conditions (A)-(I) in force, we know that $\Gamma$ 
approaches $P_6$ from within the  trapping region $\mathcal T_1$ in \eq{trap_1} (see
discussion following Eqn.\ \eq{trap_1}). Also, $\mathcal T_1$ is bounded above and below by the 
levels sets $\{F=0\}$ and $\{G=0\}$, respectively. Therefore,
a simple condition guaranteeing that $\Gamma$ approaches $P_6$ 
with the primary slope $L_1$, is that $L_1$ is intermediate 
to the slopes of $\{F=0\}$ and $\{G=0\}$ at $P_6$, while $L_2$ is not.
The slopes of the latter curves are given by $-\frac{F_V}{F_C}$ and $-\frac{G_V}{G_C}$, 
respectively, and we therefore impose the (final) condition:
\begin{itemize}
\item[(J)] At $P_6$ there holds
	\beq\label{rel_locns}
		L_2<-\textstyle\frac{F_V}{F_C}<L_1<-\textstyle\frac{G_V}{G_C},
	\eeq
	where $L_1,L_2$ are given in \eq{primry_secdry_slopes}.
\end{itemize}

\section{Concluding argument}\label{conclude}
With all conditions (A)-(J) in force, we can complete the argument for
how $\Gamma$ can be continued beyond $P_6$ to reach the origin. 
First, $\Gamma$ arrives at $P_6$ with the primary slope $L_1$. Since $P_6$ is a node 
there are infinitely many trajectories leaving $P_6$ with slope $L_1$ and moving rightward
as $x$ increases. 
It follows from \eq{rel_locns} that, near $P_2$, these trajectories are located below $\{G=0\}$ 
and above $\{F=0\}$. Infinitely many of these trajectories will meet $\{F=0\}$, and they 
do so with vanishing slope. (Infinitely many others will meet $\{G=0\}$
vertically; in the concrete cases we consider below, there is a separatrix connecting $P_6$ to 
a saddle at $P_4$.)

We now choose any one of the trajectories that cross $\{F=0\}$ near (and to the right of) $P_6$
as our continuation of $\Gamma$ beyond $P_6$. We claim that the resulting trajectory, i.e., 
$\Gamma=\{(V(x),C(x))\,|\, x_0<x<0\}$, connects $P_6$, and hence $P_2$, to the origin 
$P_1$ as $x\uparrow0$.

To verify the claim, let $P_0=(V_0,C_0)$ denote the point where $\Gamma$ crosses $\{F=0\}$, and let 
$\hat V$ be the unique positive $V$-value with $C_G(\hat V)=C_0$. 
(Note that $\hat V>0$ is uniquely determined since the function $C_G(V)$ is increasing on $[0,\infty)$).
By tracking the signs of $F(V,C), G(V,C)$ it follows that the set
\beq\label{trap_2}
	\mathcal T_2:=\{(V,C)\,|\, V_0<V<\hat V, 0<C<C_0\}.
\eeq
is a trapping region for \eq{V_sim2}-\eq{C_sim2} (with $x<0$); see Figure \ref{V_C_plane}. 
The argument for this is straightforward thanks to the fact that both $\{F=0\}$ and $\{G=0\}$ 
are graphs of {\em increasing} functions of $V$, which is ensured by conditions (D)-(F). 
In particular, $\{F=0\}$ and $\{G=0\}$ meet $\partial \mathcal T_2$ only at $P_0$ and at 
$\hat P:=(\hat V, C_0)$, respectively. It follows that $\mathcal T_2$ contains no critical point 
for \eq{CV_ode}, and any trajectory  within $\mathcal T_2$ must therefore tend to the 
unique critical point $P_1=(0,0)\in\partial \mathcal T_2$. 

A further inspection of  \eq{V_sim2}-\eq{C_sim2} shows that 
\begin{itemize}
	\item{} $C'(x)<0<V'(x)$ whenever $(V(x),C(x))\in\mathcal T_2^{left}$, where
	\[\mathcal T_2^{left}:=\{(V,C)\,|\, V_0<V<0, 0<C<C_0\}\cup\{(V,C)\,|\, 0\leq V<V_0, C_G(V)<C<C_0\};\]
	\item{} $C'(x),V'(x)<0$ whenever $(V(x),C(x))\in\mathcal T_2^{right}$, where
	\[\mathcal T_2^{right}:=\{(V,C)\,|\, 0<V<0, 0<C<C_G(V)\}.\]
\end{itemize}
Therefore, after $\Gamma$ has entered $\mathcal T_2$ at its top-left corner, it will move in a 
South-East direction within $\mathcal T_2^{left}$. One of two things can now occur: Either $\Gamma$
remains within $\mathcal T_2^{left}$ until it reaches $P_1$, or it first crosses vertically into 
$\mathcal T_2^{right}$ along $\{G=0\}$, and then continues on to $P_1$. (In our numerical tests we have not observed the latter behavior.)
Either way, $\Gamma$ connects $P_2$ to $P_1$, via $P_6$. 

Finally, it was argued in Section \ref{P1} that whenever a solution $(V(x),C(x))$ of 
\eq{V_sim2}-\eq{C_sim2} approaches $P_1$ from within $\{C>0\}$, then this must occur as $x\uparrow0$. 
It follows that $\Gamma$ is the trajectory of a solution to \eq{V_sim2}-\eq{C_sim2} 
which is defined for all $x\in(x_0,0)$. 

Defining the physical flow variables via \eq{sim_vars}, we therefore obtain an
Euler flow within the space-time (fluid) region $\{(t,x)\,|\, t<0, |x|>r_0(t)\}$, where $r_0(t)$ is defined in \eq{r_0}.
We note that we have in fact constructed infinitely many examples of such cavity flows.
More precisely, the part of $\Gamma$ connecting $P_2$ to $P_6$ is uniquely determined
by the requirement that the pressure should vanish along the vacuum interface; 
see concluding remark in Section \ref{P2}. However, with our setup above ($P_6$ being 
a node), we have an infinite number of possible trajectories connecting $P_6$ to $P_1$.

Modulo the conditions (A)-(J), this completes the construction of non-isentropic, radial, and 
self-similar cavity Euler flows \eq{sim_vars} as described by Theorem \ref{thm}.

\begin{figure}
	\centering
	\includegraphics[width=10cm,height=10cm]{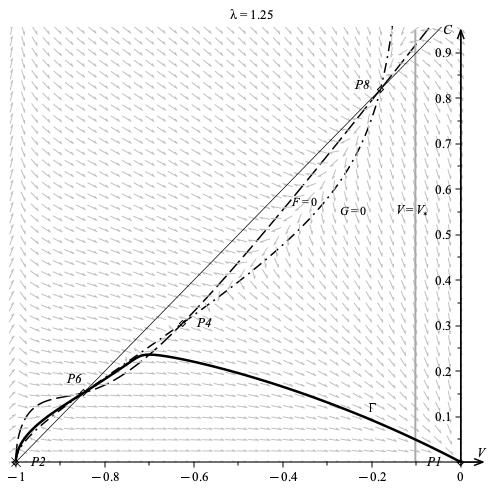}
	\caption{Phase portrait for the case $n=3$, $\gamma=\frac{5}{3}$, $\lambda=1.25$, $\kappa=-0.01$.}\label{case_1}.
\end{figure} 

\begin{figure}
	\centering
	\includegraphics[width=10cm,height=10cm]{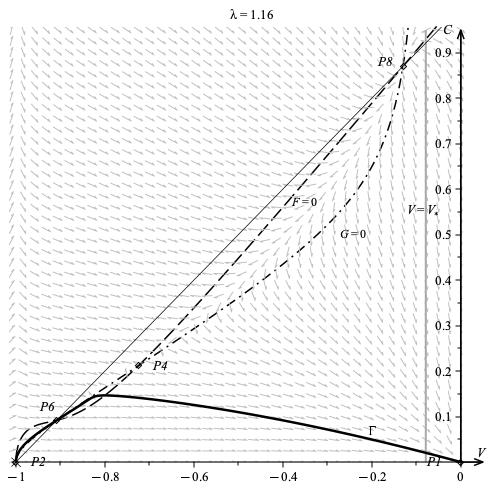}
	\caption{Phase portrait for the case $n=3$, $\gamma=\frac{7}{5}$, $\lambda=1.16$, $\kappa=-0.01$.}\label{case_2}.
\end{figure} 

\begin{figure}
	\centering
	\includegraphics[width=10cm,height=10cm]{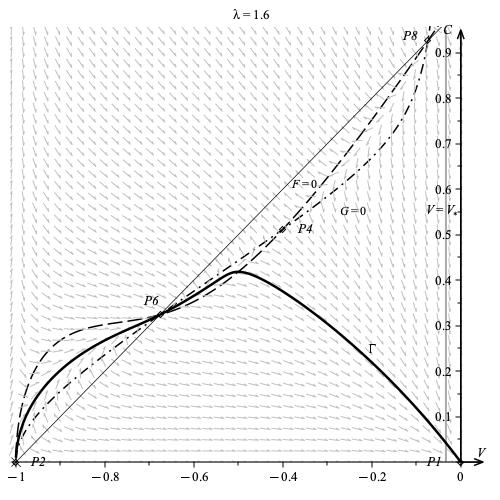}
	\caption{Phase portrait for the case $n=3$, $\gamma=3$, $\lambda=1.6$, $\kappa=0.9$.}\label{case_3}.
\end{figure} 

\section{Concrete choices of parameters verifying conditions (A)-(J)}\label{par_choices}
The verification of (A)-(J) for specific choices of the parameters 
$n,\gamma,\lambda,\kappa$
involves tedious but straightforward calculations. Below we list several physically 
relevant cases, in both two and three dimensions, where all 
ten conditions have been verified to hold. These are:
\begin{enumerate}
	\item $n=3$, $\gamma=\frac{5}{3}$, $\lambda=1.25$, $\kappa=-0.01$;
	\item $n=3$, $\gamma=\frac{7}{5}$, $\lambda=1.16$, $\kappa=-0.01$; 
	\item $n=3$, $\gamma=3$, $\lambda=1.6$, $\kappa=0.9$; 
	\item $n=2$, $\gamma=\frac{5}{3}$, $\lambda=1.09$, $\kappa=-0.01$; 
	\item $n=2$, $\gamma=\frac{7}{5}$, $\lambda=1.06$, $\kappa=-0.01$;
	\item $n=2$, $\gamma=3$, $\lambda=1.28$, $\kappa=-0.01$; 
\end{enumerate}
Maple plots of the phase portraits displaying the trajectory $\Gamma$, the level sets $\{F=0\}$, $\{G=0\}$,
as well as directional field for \eq{V_sim2}-\eq{C_sim2} (for $x<0$), are given 
for the 3-dimensional cases in Figures \ref{case_1}-\ref{case_3}. In each figure 
$\Gamma$ is the thick solid curve, $\{F=0\}$ and $\{G=0\}$ are dashed and dashed-dotted,
respectively; the vertical grey line is the asymptote $V=V_*$ of $\{G=0\}$. 
The critical line $\{C=1+V\}$ and the relevant critical points $P_1$, $P_2$, $P_4$, $P_6$,
and $P_8$ are also displayed.

We finally note that the conditions (A)-(J) all take the form of strict inequalities involving 
continuous functions of the parameters $\gamma,\lambda,\kappa$. Thus, for each of the 
cases (1)-(6) listed above, (A)-(J) will be satisfied for all values of $\gamma,\lambda,\kappa$ sufficiently
close to those listed.


\begin{bibdiv}
\begin{biblist}
\bib{bk}{article}{ 
   author={Brushlinskii, K. V.},
   author={Kazhdan, Ya. M.},
   title={On auto-models in the solution of certain problems of gas dynamics},
   journal={Russian Math. Surveys},
   volume={18},
   date={1963},
   pages={1--22},
}
\bib{cf}{book}{
   author={Courant, R.},
   author={Friedrichs, K. O.},
   title={Supersonic flow and shock waves},
   note={Reprinting of the 1948 original;
   Applied Mathematical Sciences, Vol. 21},
   publisher={Springer-Verlag},
   place={New York},
   date={1976},
   pages={xvi+464},
   review={\MR{0421279 (54 \#9284)}},
}
\bib{cs2}{article}{
   author={Coutand, Daniel},
   author={Shkoller, Steve},
   title={Well-posedness in smooth function spaces for the moving-boundary
   three-dimensional compressible Euler equations in physical vacuum},
   journal={Arch. Ration. Mech. Anal.},
   volume={206},
   date={2012},
   number={2},
   pages={515--616},
   issn={0003-9527},
   review={\MR{2980528}},
   doi={10.1007/s00205-012-0536-1},
}
\bib{gras_98}{article}{
   author={Grassin, Magali},
   title={Global smooth solutions to Euler equations for a perfect gas},
   journal={Indiana Univ. Math. J.},
   volume={47},
   date={1998},
   number={4},
   pages={1397--1432},
   issn={0022-2518},
   review={\MR{1687130}},
   doi={10.1512/iumj.1998.47.1608},
}
\bib{gb}{article}{
   author={Greenspan, H. P.},
   author={Butler, D. S.},
   title={On the expansion of a gas into vacuum},
   journal={J. Fluid Mech.},
   volume={13},
   date={1962},
   pages={101--119},
   issn={0022-1120},
   review={\MR{0142269}},
   doi={10.1017/S0022112062000543},
}
\bib{gud}{article}{
   author={Guderley, G.},
   title={Starke kugelige und zylindrische Verdichtungsst\"{o}sse in der N\"{a}he
   des Kugelmittelpunktes bzw. der Zylinderachse},
   language={German},
   journal={Luftfahrtforschung},
   volume={19},
   date={1942},
   pages={302--311},
   review={\MR{0008522}},
}
\bib{hart}{book}{
   author={Hartman, Philip},
   title={Ordinary differential equations},
   series={Classics in Applied Mathematics},
   volume={38},
   note={Corrected reprint of the second (1982) edition [Birkh\"auser,
   Boston, MA; MR0658490 (83e:34002)];
   With a foreword by Peter Bates},
   publisher={Society for Industrial and Applied Mathematics (SIAM),
   Philadelphia, PA},
   date={2002},
   pages={xx+612},
   isbn={0-89871-510-5},
   review={\MR{1929104}},
   doi={10.1137/1.9780898719222},
}
\bib{hun_60}{article}{
   author={Hunter, C.},
   title={On the collapse of an empty cavity in water},
   journal={J. Fluid Mech.},
   volume={8},
   date={1960},
   pages={241--263},
}
\bib{it_24}{article}{
   author={Ifrim, Mihaela},
   author={Tataru, Daniel},
   title={The compressible Euler equations in a physical vacuum: a
   comprehensive Eulerian approach},
   journal={Ann. Inst. H. Poincar\'e{} C Anal. Non Lin\'eaire},
   volume={41},
   date={2024},
   number={2},
   pages={405--495},
   issn={0294-1449},
   review={\MR{4721644}},
   doi={10.4171/aihpc/91},
}
\bib{jls1}{article}{
   author={Jang, Juhi},
   author={Liu, Jiaqi},
   author={Schrecker, Matthew},
   title={Converging/diverging self-similar shock waves: from collapse to
   reflection},
   journal={SIAM J. Math. Anal.},
   volume={57},
   date={2025},
   number={1},
   pages={190--232},
   issn={0036-1410},
   review={\MR{4846755}},
   doi={10.1137/24M1653240},
}
\bib{jls2}{article}{
   author={Jang, Juhi},
   author={Liu, Jiaqi},
   author={Schrecker, Matthew},
   title={On self-similar converging shock waves},
   journal={Arch. Ration. Mech. Anal.},
   volume={249},
   date={2025},
   number={3},
   pages={Paper No. 24, 83},
   issn={0003-9527},
   review={\MR{4887543}},
   doi={10.1007/s00205-025-02096-x},
}
\bib{jm1}{article}{
   author={Jang, Juhi},
   author={Masmoudi, Nader},
   title={Well-posedness for compressible Euler equations with physical
   vacuum singularity},
   journal={Comm. Pure Appl. Math.},
   volume={62},
   date={2009},
   number={10},
   pages={1327--1385},
   issn={0010-3640},
   review={\MR{2547977}},
   doi={10.1002/cpa.20285},
}
\bib{jm}{article}{
   author={Jang, Juhi},
   author={Masmoudi, Nader},
   title={Vacuum in gas and fluid dynamics},
   conference={
      title={Nonlinear conservation laws and applications},
   },
   book={
      series={IMA Vol. Math. Appl.},
      volume={153},
      publisher={Springer, New York},
   },
   date={2011},
   pages={315--329},
   review={\MR{2857004}},
   doi={10.1007/978-1-4419-9554-4-17},
}
\bib{jm_12}{article}{
   author={Jang, Juhi},
   author={Masmoudi, Nader},
   title={Well and ill-posedness for compressible Euler equations with
   vacuum},
   journal={J. Math. Phys.},
   volume={53},
   date={2012},
   number={11},
   pages={115625, 11},
   issn={0022-2488},
   review={\MR{3026570}},
   doi={10.1063/1.4767369},
}
\bib{jm2}{article}{
   author={Jang, Juhi},
   author={Masmoudi, Nader},
   title={Well-posedness of compressible Euler equations in a physical
   vacuum},
   journal={Comm. Pure Appl. Math.},
   volume={68},
   date={2015},
   number={1},
   pages={61--111},
   issn={0010-3640},
   review={\MR{3280249}},
   doi={10.1002/cpa.21517},
}
\bib{j_24}{article}{
   author={Jenssen, Helge Kristian},
   title={1-D isentropic Euler flows: self-similar vacuum solutions},
   journal={Arch. Ration. Mech. Anal.},
   volume={248},
   date={2024},
   number={6},
   pages={Paper No. 104, 46},
   issn={0003-9527},
   review={\MR{4814077}},
   doi={10.1007/s00205-024-02054-z},
}
\bib{jj}{article}{
   author={Jenssen, Helge Kristian},
   author={Johnson, Alexander},
   title={Gradient Blowup Without Shock Formation in Compressible Euler Flow},
   journal={Physics of Fluids},
   volume={36},
   date={2024},
   pages={026125},
   doi={10.1063/5.0185592}
}
\bib{laz}{article}{
   author={Lazarus, Roger B.},
   title={Self-similar solutions for converging shocks and collapsing
   cavities},
   journal={SIAM J. Numer. Anal.},
   volume={18},
   date={1981},
   number={2},
   pages={316--371},
}
\bib{liu_96}{article}{
   author={Liu, Tai-Ping},
   title={Compressible flow with damping and vacuum},
   journal={Japan J. Indust. Appl. Math.},
   volume={13},
   date={1996},
   number={1},
   pages={25--32},
   issn={0916-7005},
   review={\MR{1377457}},
   doi={10.1007/BF03167296},
}
\bib{ly1}{article}{
   author={Liu, Tai-Ping},
   author={Yang, Tong},
   title={Compressible Euler equations with vacuum},
   journal={J. Differential Equations},
   volume={140},
   date={1997},
   number={2},
   pages={223--237},
   issn={0022-0396},
   review={\MR{1483001}},
   doi={10.1006/jdeq.1997.3281},
}
\bib{ly}{article}{
   author={Liu, Tai-Ping},
   author={Yang, Tong},
   title={Compressible flow with vacuum and physical singularity},
   note={Cathleen Morawetz: a great mathematician},
   journal={Methods Appl. Anal.},
   volume={7},
   date={2000},
   number={3},
   pages={495--509},
   issn={1073-2772},
   review={\MR{1869301}},
   doi={10.4310/MAA.2000.v7.n3.a7},
}
\bib{mak_86}{article}{
   author={Makino, Tetu},
   title={On a local existence theorem for the evolution equation of gaseous
   stars},
   conference={
      title={Patterns and waves},
   },
   book={
      series={Stud. Math. Appl.},
      volume={18},
      publisher={North-Holland, Amsterdam},
   },
   isbn={0-444-70144-3},
   date={1986},
   pages={459--479},
   review={\MR{0882389}},
   doi={10.1016/S0168-2024(08)70142-5},
}
\bib{ns}{book}{
   author={Nemytskii, V. V.},
   author={Stepanov, V. V.},
   title={Qualitative theory of differential equations},
   series={Princeton Mathematical Series},
   volume={No. 22},
   publisher={Princeton University Press, Princeton, NJ},
   date={1960},
   pages={viii+523},
   review={\MR{0121520}},
}
\bib{ri_21}{article}{
   author={Rickard, Calum},
   title={The vacuum boundary problem for the spherically symmetric
   compressible Euler equations with positive density and unbounded entropy},
   journal={J. Math. Phys.},
   volume={62},
   date={2021},
   number={2},
   pages={Paper No. 021504, 27},
   issn={0022-2488},
   review={\MR{4211850}},
   doi={10.1063/5.0037656},
}
\bib{rhj_21}{article}{
   author={Rickard, Calum},
   author={Had\v zi\'c, Mahir},
   author={Jang, Juhi},
   title={Global existence of the nonisentropic compressible Euler equations
   with vacuum boundary surrounding a variable entropy state},
   journal={Nonlinearity},
   volume={34},
   date={2021},
   number={1},
   pages={33--91},
   issn={0951-7715},
   review={\MR{4183370}},
   doi={10.1088/1361-6544/abb03b},
}
\bib{rj}{book}{
   author={Ro\v zdestvenski\u i, B. L.},
   author={Janenko, N. N.},
   title={Systems of quasilinear equations and their applications to gas
   dynamics},
   series={Translations of Mathematical Monographs},
   volume={55},
   edition={Russian edition},
   publisher={American Mathematical Society, Providence, RI},
   date={1983},
   pages={xx+676},
   isbn={0-8218-4509-8},
   review={\MR{0694243}},
   doi={10.1090/mmono/055},
}
\bib{sed}{book}{
   author={Sedov, L. I.},
   title={Similarity and dimensional methods in mechanics},
   note={Translated from the Russian by V. I. Kisin},
   publisher={``Mir'', Moscow},
   date={1982},
   pages={424},
   review={\MR{693457}},
}
\bib{ser_15}{article}{
   author={Serre, Denis},
   title={Expansion of a compressible gas in vacuum},
   journal={Bull. Inst. Math. Acad. Sin. (N.S.)},
   volume={10},
   date={2015},
   number={4},
   pages={695--716},
   issn={2304-7909},
   review={\MR{3467576}},
}
\bib{sid_17}{article}{
   author={Sideris, Thomas C.},
   title={Global existence and asymptotic behavior of affine motion of 3D
   ideal fluids surrounded by vacuum},
   journal={Arch. Ration. Mech. Anal.},
   volume={225},
   date={2017},
   number={1},
   pages={141--176},
   issn={0003-9527},
   review={\MR{3634025}},
   doi={10.1007/s00205-017-1106-3},
}
\bib{stan}{book}{
   author={Stanyukovich, K. P.},
   title={Unsteady motion of continuous media},
   series={Translation edited by Maurice Holt; literal translation by J.
   George Adashko},
   publisher={Pergamon Press, New York-London-Oxford-Paris},
   date={1960},
   pages={xiii+745},
   review={\MR{0114423}},
}
\end{biblist}
\end{bibdiv}

\end{document}